\DeclareFontShape{T1}{lmr}{bx}{sc} { <-> ssub * cmr/bx/sc }{}
\theoremstyle:=definition,remark,plain\do{%
        \expandafter\g@addto@macro\csname th@\theoremstyle\endcsname{%
            \addtolength\thm@preskip\parskip
            }%
        }
\pgfplotsset{compat=newest}
\setlist[enumerate]{label=(\roman*)}
\let\oldbibliography\thebibliography
\renewcommand{\thebibliography}[1]{%
  \small
  \oldbibliography{#1}%
  \setlength{\itemsep}{0pt}%
}
\numberwithin{equation}{section}
\numberwithin{table}{section}
\numberwithin{figure}{section}
\newtheorem{theorem}{Theorem}[section]
\newtheorem{hypothesis}[theorem]{Hypothesis} 
\newtheorem{lemma}[theorem]{Lemma}
\crefname{lemma}{lemma}{lemmata}
\Crefname{lemma}{Lemma}{Lemmata}
\newtheorem{corollary}[theorem]{Corollary}
\crefname{corollary}{corollary}{corollaries}
\Crefname{corollary}{Corollary}{Corollaries}
\crefname{assumption}{assumption}{assumptions}
\Crefname{assumption}{Assumption}{Assumptions}
\theoremstyle{definition}
\newtheorem{remark}[theorem]{Remark}
\theoremstyle{definition}
\newtheorem{defn}[theorem]{Definition}
\newtheorem{example}[theorem]{Example}
\newcommand{\R}{\ensuremath\mathbb{R}}
\newcommand{\mcal}[1]{\ensuremath\mathcal{#1}}
\newcommand{\calC}{\mcal{C}}
\newcommand{\calD}{\mcal{D}}
\newcommand{\wcalD}{\widetilde{\calD}}
\newcommand{\calK}{\mcal{K}}
\newcommand{\calM}{\mcal{M}}
\newcommand{\wcalM}{\widetilde{\calM}}
\newcommand{\calU}{\mcal{U}}
\newcommand{\ddt}{\ensuremath\frac{\mathrm{d}}{\mathrm{d}t}}
\DeclareMathOperator{\rank}{rank}
\newcommand{\state}{z}
\newcommand{\stateDim}{n}
\newenvironment{smallbmatrix}{\left[\begin{smallmatrix}}{\end{smallmatrix}\right]}
\newcommand{\timeInt}{\ensuremath\mathbb{I}}
\newcommand{\Za}{Z_{\mathrm{A}}}
\newcommand{\tZa}{\widetilde{Z}_{\mathrm{A}}}
\newcommand{\Zd}{Z_{\mathrm{D}}}
\newcommand{\tZd}{\widetilde{Z}_{\mathrm{D}}}
\newcommand{\Ta}{T_{\mathrm{A}}}
\newcommand{\tTa}{\widetilde{T}_{\mathrm{A}}}
\newcommand{\domain}[1]{\mathbb{D}_{#1}}
\newcommand{\WCFL}{S}
\newcommand{\WCFR}{T}
\newcommand{\ndif}{\stateDim_{\mathrm{d}}}
\newcommand{\nalg}{\stateDim_{\mathrm{a}}}
\newcommand{\delay}{\tau}
\newcommand{\shift}[2]{\sigma_{#1}\!\left(#2\right)}
\newcommand{\history}{\phi}
\newcommand{\mos}[2]{#1_{[#2]}}
\newcommand{\dmos}[2]{\dot{#1}_{[#2]}}
\newcommand{\maxMOS}{M}
\newcommand{\mosIndexSet}{\mathcal{I}}
\newcommand{\mosIndexSetEx}{\{1,\ldots,\maxMOS\}}
\newcommand{\dcheck}[1]{\check{\check{#1}}}
\newcommand{\dhat}[1]{\hat{\hat{#1}}}
\newcommand{\cZa}{\check{Z}_{\mathrm{A}}}
\newcommand{\cZd}{\check{Z}_{\mathrm{D}}}
\newcommand{\cTa}{\check{T}_{\mathrm{A}}}
\newcommand{\hZa}{\hat{Z}_{\mathrm{A}}}
\newcommand{\hZd}{\hat{Z}_{\mathrm{D}}}
\newcommand{\hTa}{\hat{T}_{\mathrm{A}}}
\title{Delay differential-algebraic equations in real-time dynamic substructuring\footnotemark[1]}
\author{Benjamin Unger\footnotemark[2]}
\date{\today}
\begin{document}

\maketitle
\renewcommand{\thefootnote}{\fnsymbol{footnote}}
\footnotetext[1]{
This work is funded by the DFG Collaborative Research Center 910 \emph{Control of self-organizing nonlinear systems: Theoretical methods and concepts of application}, project number 163436311.}
\footnotetext[2]{Institut f\"ur Mathematik,
Technische Universität Berlin, Str.\ des 17.~Juni~136,
10623~Berlin,
Germany,
\texttt{unger@math.tu-berlin.de}. }

%
%
%=============================================================================
%=========  Abstract
%=============================================================================
%%%%%%%%%%%%%%%%%%%%%%%%%%%%%%%%%%%%%%%%%%%%%%%%%%%%%%%%%%%%%%%%%%%%%%%%
\begin{abstract} Hybrid numerical-experimental testing is a standard approach for complex dynamical structures that are, on the one hand, not easy to model due to complexity and parameter uncertainty and, on the other hand, too expensive for full-scale experiments. The main idea is to subdivide the structure in a part that can be accurately simulated with numerical methods and an experimental component. The numerical simulation and the experiment are coupled in real-time by a so-called transfer system, which induces a time-delay into the system. In this paper, we study the solvability of the resulting hybrid numerical-experimental system, which is typically described by a set of nonlinear delay differential-algebraic equations, and extend existing results from the literature to this case.
 	
\vskip .3truecm

{\bf Keywords:} delay differential-algebraic equation, dynamic substructuring, shift index, initial trajectory problem
\vskip .3truecm

{\bf AMS(MOS) subject classification:} 34A09, 34A12, 34K32, 34K40
% 34A09  	Implicit ordinary differential equations, differential-algebraic equations
% 34A12 - Initial value problems, existence, uniqueness, continuous dependence and continuation of solutions
% 34K32  -	Implicit functional-differential equations
% 34K40  -	Neutral functional-differential equations
\end{abstract}

%%%%%%%%%%%%%%%%%%%%%%%%%%%%%%%%%%%%%%%%%%%%%%%%%%%%%%%%%%%%%%%%%%%%%%%%

%=============================================================================
%=========  Introduction
%=============================================================================
\section{Introduction}
We study existence and uniqueness of solutions for a specific class of nonlinear \emph{delay differential-algebraic equations} (DDAEs)
\begin{subequations}
\label{eq:DDAE:IVP}
\begin{align}
	\label{eq:DDAE}
	0 &= F(t,\state(t),\dot{\state}(t),\state(t-\delay)) & \text{for $t>0$}\\
	\intertext{with initial trajectory (also called \emph{history function})}
	\state(t) &= \history(t) &\text{for $t\in[-\delay,0]$}
\end{align}
\end{subequations}
that arise in the analysis of hybrid numerical-experimental testing, which is commonly used in earthquake engineering, see \cite{BurW08} and the references therein.
The main reason for a hybrid numerical-experimental setup is the fact that in some applications the description of the model in terms of differential equations is difficult due to its complex nature or uncertainty \cite{WilB01}. 
Since testing of a complete prototype may be prohibitively expensive, is it desirable to incorporate the benefits of actual testing with the benefits of numerical simulation. 
This is accomplished by subdividing the system under investigation into smaller subsystems, which are typically referred to as substructures; see \Cref{fig:subsystems} for an illustration. Alternatively, a bottom-up approach is facilitated by modern modeling languages such as \textsc{Modellica} (\url{https://www.modelica.org}) or \textsc{Matlab/Simulink} (\url{https://www.mathworks.org}). These frameworks compose the complete model by linking small components from a large library together. Such an automated modeling concept typically results in a combination of differential and algebraic equations, thus making the complete model a differential-algebraic equation (DAE).
\begin{figure}
	\centering
	\begin{tikzpicture}
		\tikzstyle{myEllipse} = [ellipse,outer sep=.2cm,inner sep=0.1cm, draw, thick, fill=black!25]

		\draw[fill=black!5,thick] (3.5,0) ellipse (6.5 and 2.2);
		
		\node (sys1) [myEllipse] at (0,0) {$
			\begin{aligned}
				0 &= \check{F}(t,\state_1,\dot{\state}_1,u_1),\\
				y_1 &= \check{G}(t,\state_1)
			\end{aligned}$};
		\node (sys2) [myEllipse] at (7,0) {$
			\begin{aligned}
				0 &= \hat{F}(t,\state_2,\dot{\state}_2,u_2),\\
				y_2 &= \hat{G}(t,\state_2)
			\end{aligned}$};
		
		\node[color=black!90] at (3.5,1.4) {\textbf{\textsc{Physical System}}};
		
		\draw[<-, thick,transform canvas={yshift=-.3em}] (sys1) -- (sys2.west) node [below,pos=0.5] {$u_1 = y_2$};
		\draw[->, thick,transform canvas={yshift=.3em}] (sys1) -- (sys2.west) node [above,pos=0.5] {$u_2 = y_1$}; ;
	\end{tikzpicture}
	\caption{Decomposition of a physical system into substructures}
	\label{fig:subsystems}
\end{figure}
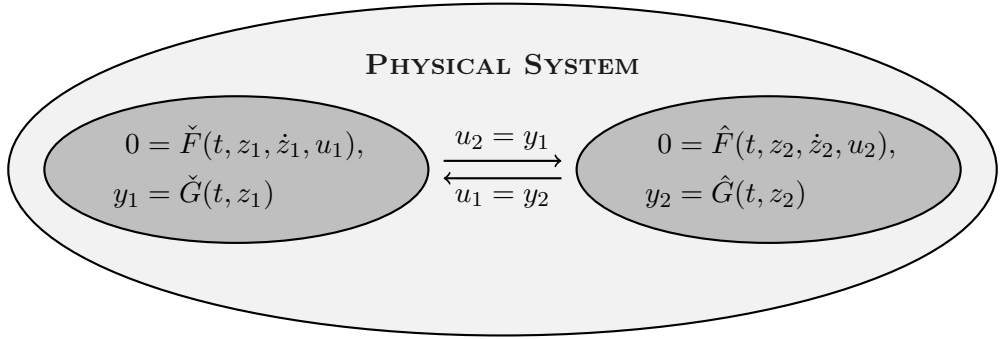
 
Having decomposed the system into smaller substructures, the  numerical-experimental paradigm is to test only a specific substructure experimentally, while the remainder of the system is simulated numerically. To ensure dynamical interaction in real-time, the experiment and the numerically simulation have to happen simultaneously with a possibility to interact through a well-defined interface. In \emph{real-time dynamic substructuring} or \emph{hardware-in-the-loop testing} \cite{BurW08} the interface, called \emph{transfer system}, is typically provided by a set of hydraulic actuators and a set of sensors. Since the dynamic characteristic of any actuator includes a response delay \cite{HorIKN99, WalSNWK05}, the resulting system is a DDAE. Note that further delays might be present, which arise, for instance, from data acquisition, computation, or digital signal processing. In many applications, these delays are small compared to the actuator delay and may thus be neglected in the modeling process; for more details, we refer to  \cite{KryBGHW06} and the references therein. The model equations for the hybrid numerical-experimental approach are discussed in \Cref{sec:modelEquations}, yielding the coupled DDAE~\eqref{eq:hybridSystem}. The approach is exemplified by a coupled pendulum-mass-spring-damper system (cf.~\Cref{fig:substructuringPMSD}) taken from \cite{KryBGHW06}. Our two main results are:
\begin{enumerate}
	\item \Cref{lem:hybrid:linear:shifted:regular} details that if the subsystems are linear time-invariant regular DAEs, then the system arising from the hybrid approach is regular in the sense that the associated DAE (see~\Cref{def:associatedDAE}) is regular. The result is extended to nonlinear strangeness-free subsystems in \Cref{thm:regularStrangenessFree} and extended to higher index in \Cref{thm:hybrid:shifted:regular}. In particular, we give a bound on the index of the hybrid system in terms of the index of the subsystems. Notably, in order to obtain the regularity results we have to shift certain equations of the hybrid model in time.
	\item The regularity result combined with the method of steps \cite{BelZ03} allows us to establish existence results for the initial trajectory problem \eqref{eq:DDAE:IVP}, provided that the DDAE~\eqref{eq:DDAE} is not advanced. The details are presented in \Cref{def:RNA} and \Cref{thm:solvabilityDDAE}.
\end{enumerate}

Our analysis extends existing solvability results to a more general class of nonlinear DDAEs. Classical solution theory for DDAEs that need not be shifted is developed in \cite{AscP95,Cor00} for nonlinear DDAEs with a special structure and for linear time-invariant DDAEs in \cite{Cam80}. Shifting and its consequences are studied in \cite{Cam95,HaM12,HaMS14,HaM16,Ha15,TreU19,AhrU19ppt,Pop06,Ha18}. 
Numerical time integration methods are developed and analyzed in \cite{GugH08,GugH14,HuCH20,LinTB18,Hau97,BakPT02,ShaG06,HaM16,BetCT15,TiaYK14,CheZ12,HuCH18}. Most of the references for the numerical methods require that there is no need to shift equations and that the associated DAE has differentiation index one. Notable exceptions are provided for instance in \cite{AscP95,Hau97,BakPT02,HaM16,AhrU19ppt}. 
The stability and asymptotic stability of certain classes of DDAEs is studied in \cite{LuzR06,Log98,Fri02,Mic11,DuLMT13,CamL09,ZhuP97,ZhuP98,XuVSL02,Ha18a,LinT15,MazBNC20}. 

After this introduction we discuss a motivational example in \Cref{subsec:motivationalExample} and give a general description of the hybrid numerical-experimental system in \Cref{subsec:modelEquations}. As a preparation for our main results, we review the strangeness index \cite{KunM06} and the differentiation index \cite{BreCP96} for differential-algebraic equations in \Cref{sec:DAEs} with a particular focus on coupled systems. In particular, we show in \Cref{ex:subsystems:Index:1,ex:coupledSfreeSystems} that the index of a coupled system is not necessarily related to the index of the subsystems.

\paragraph*{Notation}
As is common in the literature for delay equations, $\ddt$ denotes the derivative from the right and we write $\dot{f} = \ddt f$, $\ddot{f} = \left(\ddt\right)^2 f$, and $f^{(k)} \vcentcolon= (\ddt)^k f$. Similarly, the space $\calC^k(\timeInt,\R^\stateDim)$ denotes the space of $k$-times continuously differentiable functions from the time interval $\timeInt$ into $\R^{\stateDim}$. The ring of polynomials with real coefficients is denoted by $\R[s]$. For the concatenation of vectors $v_1,\ldots,v_k\in\R^{\stateDim}$ we use the notation $[v_1;\ldots;v_k] \vcentcolon= \begin{bmatrix}
	v_1^T & \ldots v_k^T
\end{bmatrix}^T$, where $v_i^T\in\R^{1\times\stateDim}$ denotes the transpose of $v_i$.

\section{Motivational example and model equations}
\label{sec:modelEquations}

\subsection{Coupled Pendulum-Mass-Spring-Damper system}
\label{subsec:motivationalExample}
We start our exposition on hybrid numerical-experimental systems with a motivational example taken from \cite{KryBGHW06} consisting of a pendulum that is coupled to a mass-spring-damper system. For our example, we consider the mass-spring-damper system as the numerical simulation and the pendulum as the experiment; see \Cref{fig:substructuringPMSD} for an illustration.
\begin{figure}
	\centering
	\begin{subfigure}[b]{0.4\textwidth}
		\centering
		\begin{tikzpicture}
			\tikzstyle{spring}=[thick,decorate,decoration={zigzag,pre length=0.3cm,post length=0.3cm,segment length=6}]
			\tikzstyle{damper}=[thick,decoration={markings,  
			  mark connection node=dmp,
			  mark=at position 0.5 with 
			  {
			    \node (dmp) [thick,inner sep=0pt,transform shape,rotate=-90,minimum width=15pt,minimum height=3pt,draw=none] {};
			    \draw [thick] ($(dmp.north east)+(2pt,0)$) -- (dmp.south east) -- (dmp.south west) -- ($(dmp.north west)+(2pt,0)$);
			    \draw [thick] ($(dmp.north)+(0,-5pt)$) -- ($(dmp.north)+(0,5pt)$);
			  }
			}, decorate]
			
			% the mass-spring-damper system
			\node (Mass) [fill,black!30,minimum width=3cm,minimum height=1.5cm] at (0,0) {};
			\node [left,xshift=-.2cm] at (Mass) {$M$};
			\node [above right] at (Mass) {$(x_1,y_1)$};
			\node (ground) at (Mass.south) [yshift=-2.3cm,fill,pattern=north east lines,draw=none,minimum width=5cm,minimum height=.3cm,anchor=north] {};
			\draw [very thick] (ground.north west) -- (ground.north east);
			\draw [spring] ($(ground.north) - (1,0)$) -- ($(Mass.south)-(1,0)$) node [midway, left,xshift=-.2cm] {$K$};
			\draw [damper] ($(ground.north) + (1,0)$) -- ($(Mass.south)+(1,0)$) node [midway, left,xshift=-.4cm] {$C$};
			
			% the coordinate system
			\draw[gray,->] ($(Mass) + (0,0)$) -- ($(Mass) + (2.5,0)$) node[black,right] {$x$};
			\draw[gray,->] ($(Mass) + (0,0)$) -- ($(Mass) + (0,1.5)$) node[black,right] {$y$};
			
			% the pendulum
			\node (pendulum) at ($(Mass) + (2.5,-1.5)$) {};
			
			\draw[gray,dashed,thick,->] (pendulum) arc (-31:-6:2.9cm);
			\draw[gray,dashed,thick,->] (pendulum) arc (-31:-56:2.9cm);
			% data for the dashed line:
			% angle = arctan(-1.5/2.5)*180/pi = 31 where -1.5 and 2.5 are the numbers in (pendulum)
			% radius = sqrt(1.5^2+2.5^2) = 2.9 where 1.5 and 2.5 are the numbers in (pendulum)
			
			\draw [fill=black] (pendulum) circle (0.15);
			\node [right,xshift=.2cm] at (pendulum) {$m$};
			\node [below,yshift=-.15cm] at (pendulum) {$(x_2,y_2)$};
			\draw [very thick] (0,0) -- (pendulum) node [pos=0.7,above right] {$L$};			
			
			\draw[fill=black] (Mass) circle (0.1);
		\end{tikzpicture}
		\caption{Fully coupled system}
		\label{fig:substructuringPMSD:coupled}
	\end{subfigure}
	\begin{subfigure}[b]{0.58\textwidth}
		\centering
		\begin{tikzpicture}
			\tikzstyle{spring}=[thick,decorate,decoration={zigzag,pre length=0.3cm,post length=0.3cm,segment length=6}]
			\tikzstyle{damper}=[thick,decoration={markings,  
			  mark connection node=dmp,
			  mark=at position 0.5 with 
			  {
			    \node (dmp) [thick,inner sep=0pt,transform shape,rotate=-90,minimum width=15pt,minimum height=3pt,draw=none] {};
			    \draw [thick] ($(dmp.north east)+(2pt,0)$) -- (dmp.south east) -- (dmp.south west) -- ($(dmp.north west)+(2pt,0)$);
			    \draw [thick] ($(dmp.north)+(0,-5pt)$) -- ($(dmp.north)+(0,5pt)$);
			  }
			}, decorate]
			
			% the mass-spring-damper system
			\node (Mass) [fill,black!30,minimum width=2cm,minimum height=1cm] at (0,0) {};
			\node (Fext) at ($(Mass) + (0,1.2)$) {};
			\draw [very thick,->] ($(Mass)$) -- (Fext) node[below right] {$F_{\mathrm{ext}}$};
			\node [left,xshift=-.2cm] at (Mass) {$M$};
			\node (ground) at (Mass.south) [yshift=-1.5cm,fill,pattern=north east lines,draw=none,minimum width=2.5cm,minimum height=.3cm,anchor=north] {};
			\draw [very thick] (ground.north west) -- (ground.north east);
			\draw [spring] ($(ground.north) - (.4,0)$) -- ($(Mass.south)-(.4,0)$) node [midway, left,xshift=-.15cm] {$K$};
			\draw [damper] ($(ground.north) + (.4,0)$) -- ($(Mass.south)+(.4,0)$) node [midway, right,xshift=.3cm] {$C$};
			
			\draw[fill=black] (Mass) circle (0.1);
			
			\node (numMod) [align=center,anchor=north] at ($(ground.south)-(0,.2)$) {\textsc{numerical}\\\textsc{model}};
			
			\draw[rounded corners] ($(ground.west |- Fext) + (-.2,.2)$) rectangle ($(ground.east |- numMod.south) + (.2,-.2)$);
			
			% the pendulum and the actuator
			\begin{scope}[xshift=5cm]
				\node (Mass2) {};
				\node (Fmeasured) at ($(Mass2) + (0,1.2)$) {};
				\draw [very thick,->] ($(Mass2)$) -- (Fmeasured) node[below right] {$F_{\mathrm{pendulum}}$};
				\draw[fill=black] (Mass2) circle (0.1);
				\node (ground2) at ($(Mass2)-(0,0.5)$) [xshift=.7cm,yshift=-1.5cm,fill,pattern=north east lines,draw=none,minimum width=2.5cm,minimum height=.3cm,anchor=north] {};
				\draw [very thick] (ground2.north west) -- (ground2.north east);
				
				\node (pendulum) at ($(Mass2) + (1.1,-1.5)$) {};
			
				\draw[gray,dashed,thick,->] (pendulum) arc (-53.7:-33.7:1.86cm);
				\draw[gray,dashed,thick,->] (pendulum) arc (-53.7:-73.7:1.86cm);
			
				\draw [fill=black] (pendulum) circle (0.15);
				\node [right,xshift=.2cm] at (pendulum) {$m$};
				\draw [very thick] (0,0) -- (pendulum) node [pos=0.7,above right] {$L$};			
			
				\node (actuator) at ($(Mass2) - (0,0.9)$) {};
				\node (actBottom) at (actuator |- ground2.north) {};
				\draw [fill=black!50] ($(actBottom) - (0.1,0)$) rectangle ($(actuator) + (0.1,0)$);
				\draw ($(actuator) - (0.05,0)$) -- ($(actuator) - (0.05,-.55)$) -- ($(actuator) - (0.4,-.55)$) -- ($(actuator |- Mass2) - (0.4,0.1)$) -- ($(actuator |- Mass2) + (0.4,-0.1)$) -- ($(actuator) + (0.4,.55)$) -- ($(actuator) + (0.05,.55)$) -- ($(actuator) + (0.05,0)$);
				
				\node (expAct) [align=center,anchor=north] at ($(ground2.south)-(0,.2)$) {\textsc{experiment}\\\textsc{+ actuator}};
			
				\draw[rounded corners] ($(ground2.west |- Fmeasured) + (-.2,.2)$) rectangle ($(ground2.east |- expAct.south) + (.2,-.2)$);
			\end{scope}
			
			% interactions between models
			\draw [thick,->,shorten >=0.5cm,shorten <=0.5cm,] (Mass -| ground.east) -- (Mass2 -| ground2.west) node[midway,align=center] {\small adjust\\[.2em]\small position};
			
			\draw [thick,->,shorten >=0.5cm,shorten <=0.5cm,] (ground2.west) -- (ground.east) node[midway,align=center] {\small send\\[.2em]\small $F_{\mathrm{pendulum}}$};
		\end{tikzpicture}
		\caption{Hybrid numerical-experimental setup}
		\label{fig:substructuringPMSD:hybrid}
	\end{subfigure}
	\caption{Real-time dynamic substructuring for a coupled pendulum-mass-spring-damper system}
	\label{fig:substructuringPMSD}
\end{figure}
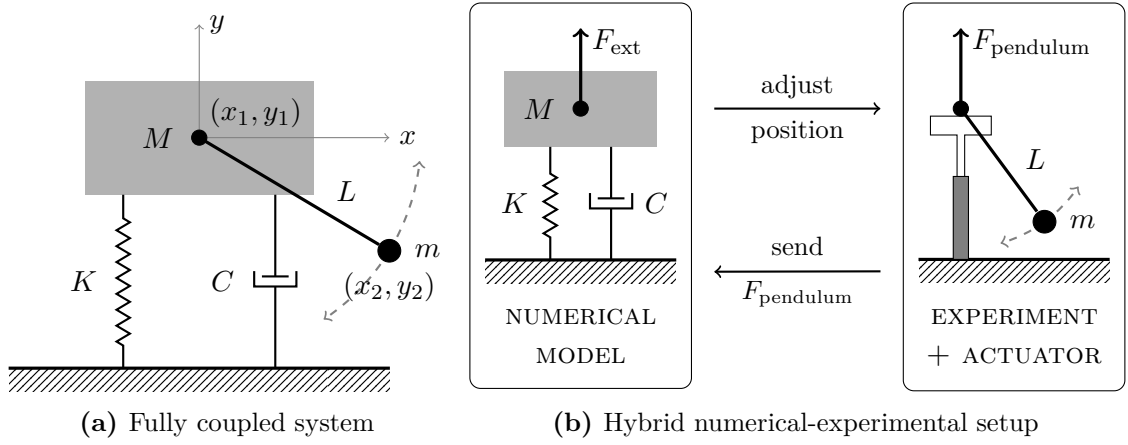
For our numerical model, we assume that the mass $M$ is mounted on a linear spring and a linear viscous damper. The resulting equation of motion for the mass-spring-damper system is given by
\begin{equation}
	\label{eq:MSDsystem}
	M\ddot{y}_1 + C\dot{y}_1+ Ky_1 = F_{\mathrm{ext}},
\end{equation}
where $C$ and $K$ denote the damping and the stiffness coefficient, respectively. The external force, which in this scenario will be provided by the pendulum is given by $F_{\mathrm{ext}}$. We assume that the pendulum is given by a point mass $m$ that is attached to the spring-mass-damper system via a massless rod of length $L$. Assuming no friction, the model for the pendulum is given by
\begin{equation}
	\label{eq:pendulum}
	\begin{aligned}
		m\ddot{x}_2 &= -2\lambda x_2,\\
		m\ddot{y}_2 &= -2\lambda (y_2-y_1) - m\mathsf{g},\\
		0 &= x_2^2 + (y_2-y_1)^2 - L^2,
	\end{aligned}
\end{equation}
with gravitational constant $\mathsf{g}$ and Langrange multiplier $\lambda$. By Newton's second law, the force that the pendulum generates in $y$-direction is given by 
\begin{displaymath}
	F_{\mathrm{pendulum}} = -2\lambda (y_2-y_1) - m\mathsf{g}.
\end{displaymath}
Consequently, the equations of motion for the fully coupled system (as depicted in \Cref{fig:substructuringPMSD:coupled}) are given by
\begin{equation}
	\label{eq:PMSD:coupled}
	\begin{aligned}
			M\ddot{y}_1 + C\dot{y}_1 + Ky_1 &= -2\lambda (y_2-y_1) - m\mathsf{g},\\
			m\ddot{x}_2 &= -2\lambda x_2,\\
			m\ddot{y}_2 &= -2\lambda (y_2-y_1) - m\mathsf{g},\\
			0 &= x_2^2 + (y_2-y_1)^2 - L^2,
	\end{aligned}
\end{equation}
with unknown functions $y_1,x_2,y_2$, and $\lambda$. In the hybrid numerical-experimental setup (cf.~\Cref{fig:substructuringPMSD:hybrid}), the actuator introduces a time-delay $\delay>0$ into the system, which is assumed constant. The delay can be understood as an offset in time between the mass-spring-damper dynamics \eqref{eq:MSDsystem} and the pendulum dynamics \eqref{eq:pendulum}. In particular, we have to replace $t$ by $t-\delay$ in the pendulum dynamics \eqref{eq:pendulum} and the force $F_{\mathrm{pendulum}}$. Thus, the complete mathematical description for the hybrid numerical-experimental setup is given by
\begin{subequations}
	\label{eq:PMSD:hybrid}
	\begin{align}
		\label{eq:PMSD:hybrid:1}M\ddot{y}_1 + C\dot{y}_1+ Ky_1 &= -2\lambda(\cdot-\delay) (y_2(\cdot-\delay)-y_1(\cdot-\delay)) - m\mathsf{g},\\
		\label{eq:PMSD:hybrid:2}m\ddot{x}_2(\cdot-\delay) &= -2\lambda(\cdot-\delay) x_2(\cdot-\delay),\\
		\label{eq:PMSD:hybrid:3}m\ddot{y}_2(\cdot-\delay) &= -2\lambda(\cdot-\delay) (y_2(\cdot-\delay)-y_1(\cdot-\delay)) - m\mathsf{g},\\
		\label{eq:PMSD:hybrid:4}0 &= x_2(\cdot-\delay)^2 + (y_2(\cdot-\delay)-y_1(\cdot-\delay))^2 - L^2.
	\end{align}
\end{subequations}
If we introduce new variables for $\dot{y}_1$, $\dot{x}_2$, and $\dot{y}_2$, then we can rewrite~\eqref{eq:PMSD:hybrid} in the form~\eqref{eq:DDAE}. 

\subsection{Hybrid numerical-experimental system}
\label{subsec:modelEquations}
For the general description of the model equations, we assume that we have already subdivided the complete model into two sub-models, which later on represent the numerical part and the experimental part. For an illustrate we refer to \Cref{fig:subsystems}. The first subsystem is described by the descriptor system
\begin{subequations}
	\label{eq:DAE:numerical}
	\begin{align}
		0 &= \check{F}(t,\state_1,\dot{\state}_1,u_1),\\
		y_1 &= \check{G}(t,\state_1)		
	\end{align}
\end{subequations}
with state $\state_1(t)\in\R^{\stateDim_1}$, input $u_1(t)\in\R^{m}$, and output $y_1(t)\in\R^{p}$. The second subsystem is given by
\begin{subequations}
	\label{eq:DAE:experimental}
	\begin{align}
		0 &= \hat{F}(t,\state_2,\dot{\state}_2,u_2),\\
		y_2 &= \hat{G}(t,\state_2)		
	\end{align}
\end{subequations}
with $\state_2(t)\in\R^{\stateDim_2}$, $u_2(t)\in\R^{m_2}$, and $y_2(t)\in\R^{p_2}$. The complete model is given by imposing the interconnections
\begin{equation}
	\label{eq:interconnection}
	u_1(t) = y_2(t)\qquad\text{and}\qquad u_2(t) = y_1(t).
\end{equation}
In particular, we assume $m_1 = p_2$ and $m_2 = p_1$. The complete model as depicted in \Cref{fig:subsystems} is thus given by the implicit equation
\begin{equation}
	\label{eq:completeModel}
	0 = \begin{bmatrix}
		\check{F}(t,\state_1,\dot{\state}_1,\hat{G}(t,\state_2))\\
		\hat{F}(t,\state_2,\dot{\state}_2,\check{G}(t,\state_1))
	\end{bmatrix}
\end{equation}
with initial conditions
\begin{equation}
	\label{eq:initialCondition}
	\state_1(0) = \zeta_1\qquad\text{and}\qquad \state_2(0) = \zeta_2.
\end{equation}
Let us emphasize that the initial values $\zeta_1\in\R^{\stateDim_1}$ and $\zeta_2\in\R^{\stateDim_2}$ have to satisfy some consistency condition for a (classical) solution to exist. For further details we refer to \cite{KunM06} and the forthcoming \Cref{sec:DAEs}.

\begin{example}
	\label{ex:PMSD:firstOrder}
	To recast the coupled pendulum-mass-spring-damper system, we first have to transform the systems to first order. Introducing new variables for the velocities and renaming, we obtain
	\begin{align*}
		\check{F}(t,\state_1,\dot{\state}_1,u_1) &= \begin{bmatrix}
			\dot{\state}_{1,1} - \state_{1,2}\\
			M\dot{\state}_{1,2} + C\state_{1,2} + K\state_{1,1} - u_1
		\end{bmatrix}, & \check{G}(t,\state_1) &= \state_{1,1},\\
		\hat{F}(t,\state_2,\dot{\state}_2,u_2) &= \begin{bmatrix}
			\dot{\state}_{2,1} - \state_{2,4}\\
			\dot{\state}_{2,2} - \state_{2,5}\\
			\state_{2,6} - u_2\\
			m\dot{\state}_{2,4} + 2\state_{2,3}\state_{2,1}\\
			m\dot{\state}_{2,5} + 2\state_{2,3}(\state_{2,2} - u_2) + m\mathsf{g}\\
			\state_{2,1}^2 + (\state_{2,2} - u_2)^2 - L^2
		\end{bmatrix}, & 
		\hat{G}(t,\state_2) &= -2\state_{2,3}(\state_{2,2} - \state_{2,6}) - m\mathsf{g}.
	\end{align*}
	Note that we have introduced the artificial variable $\state_{2,6}$ to account for the feedthrough.
\end{example}

If both subsystems are linear time-invariant, then we write
\begin{equation}
	\label{eq:DAE:linearSubsystems}
	\begin{aligned}
	\check{F}(t,\state_1,\dot{\state}_1,u_1) &= \check{E}\dot{\state}_1 - \check{A}\state_1 - \check{B}u_1 + \check{f}(t), \quad& 
	\check{G}(t,\state_1) &= \check{C}\state_1,\\
	\hat{F}(t,\state_2,\dot{\state}_2,u_2) &= \hat{E}\dot{\state}_2 - \hat{A}\state_2 - \hat{B}u_2 + \hat{f}(t), & \hat{G}(t,\state_2) &= \hat{C}\state_2,
	\end{aligned}
\end{equation}
with external forcing functions $\check{f}$ and $\hat{f}$. 
such that the complete model~\eqref{eq:completeModel} is given by
\begin{equation}
	\label{eq:completeModel:linear}
	\begin{bmatrix}
		\check{E} & \\
		& \hat{E}
	\end{bmatrix}\begin{bmatrix}
		\dot{\state}_1\\
		\dot{\state}_2
	\end{bmatrix} = \begin{bmatrix}
		\check{A} & \check{B}\hat{C}\\
		\hat{B}\check{C} & \hat{A}
	\end{bmatrix}\begin{bmatrix}
		\state_1\\
		\state_2
	\end{bmatrix} + \begin{bmatrix}
		\check{f}\\
		\hat{f}
	\end{bmatrix}.
\end{equation}

Our standing assumption is that the first model is simulated numerically, while the second model is tested experimentally. Following the discussion above, the transfer system that realizes the numerical results in real-time within the experiment is delayed, such that the second model technically acts at a different time point. The hybrid numerical-experimental model, which we study in this paper, is thus given by
\begin{equation}
	\label{eq:hybridSystem}
	0 = \begin{bmatrix}
		\check{F}(t,\state_1(t),\dot{\state}_1(t),\hat{G}(t-\delay,\state_2(t-\delay)))\\
		\hat{F}(t-\delay,\state_2(t-\delay),\dot{\state}_2(t-\delay),\check{G}(t-\delay,\state_1(t-\delay)))
	\end{bmatrix},
\end{equation}
which in the linear case simplifies to
\begin{equation}
	\label{eq:hybridSystem:linear}
	\begin{split}
	\begin{bmatrix}
		\check{E} & \\
		& 0
	\end{bmatrix}\begin{bmatrix}
		\dot{\state}_1(t)\\
		\dot{\state}_2(t)
	\end{bmatrix} + \begin{bmatrix}
		0 & \\
		& \hat{E}
	\end{bmatrix}\begin{bmatrix}
		\dot{\state}_1(t-\delay)\\
		\dot{\state}_2(t-\delay)
	\end{bmatrix} = \hspace{16em}\\
	\begin{bmatrix}
		\check{A} & \\
		 & 0
	\end{bmatrix}\begin{bmatrix}
		\state_1(t)\\
		\state_2(t)
	\end{bmatrix} + \begin{bmatrix}
		 & \check{B}\hat{C}\\
		\hat{B}\check{C} & \hat{A}
	\end{bmatrix}\begin{bmatrix}
		\state_1(t-\delay)\\
		\state_2(t-\delay)
	\end{bmatrix} + \begin{bmatrix}
		\check{f}\\
		\hat{f}(t-\delay)
	\end{bmatrix}.
	\end{split}
\end{equation}
Note that if the hybrid model is initialized at time $t_0$, then the numerical simulation starts at $t_0$, while the experimental part starts at $t_0+\delay$. In particular, it is sufficient to prescribe an initial trajectory solely for the experimental part, i.e., only for $\state_2$.

\begin{remark}
	In general, the input and output dimensions $m_i$ and $p_i$ may not be compatible and instead, a relation of the form
	\begin{equation}
		\label{eq:couplingRelation}
		0 = \calK(t,u_1,y_1,u_2,y_2)
	\end{equation}
	can be imposed to connect the two subsystems. For the hybrid numerical-experimental model \eqref{eq:hybridSystem} the coupling relation \eqref{eq:couplingRelation} has to be replaced with
	\begin{displaymath}
		0 = \calK(t,u_1(t),y_1(t-\delay),u_2(t-\delay),y_2(t-\delay)).
	\end{displaymath}
	To simplify notation we restrict ourselves to the setting described above and emphasize that all results can be generalized to the implicit coupling relation \eqref{eq:couplingRelation}.
\end{remark}

\section{Differential-algebraic equations}
\label{sec:DAEs}
The two subsystems~\eqref{eq:DAE:experimental} and~\eqref{eq:DAE:numerical} are nonlinear DAEs of the form
\begin{equation}
	\label{eq:DAE}
	F(t,\state(t),\dot{\state}(t),u(t)) = 0,
\end{equation}
where $\state(t)\in\R^{\stateDim}$ and $u(t)\in\R^{m}$ denote, respectively, the \emph{state} and \emph{control} of the system, which is posed on the (compact) time interval $\timeInt \vcentcolon= [0,T]$. The function 
\begin{displaymath}
	F\colon \timeInt\times \domain{\state} \times \domain{\dot{\state}} \times \domain{u}\to\mathbb{R}^n
\end{displaymath}
with open sets $\domain{\state},\domain{\dot{\state}}\subseteq \R^{\stateDim}$, $\domain{u}\subset\R^{m}$ is assumed to be sufficiently smooth. A function $\state\in\calC^1(\timeInt,\R^{\stateDim})$ is called a \emph{(classical) solution} of \eqref{eq:DAE} if $\state$ satisfies \eqref{eq:DAE} pointwise. An initial condition $\state(t_0) = \state_0\in\R^{\stateDim}$ is called \emph{consistent} if for a given control $u$, the associated initial value problem (IVP)
\begin{equation}
	\label{eq:IVP}
	F(t,\state(t),\dot{\state}(t),u(t)) = 0,\qquad \state(t_0) = \state_0
\end{equation}
has at least one solution. Throughout this paper we assume that \eqref{eq:IVP} is \emph{regular}, i.\,e., for every sufficiently smooth input $u$ the DAE~\eqref{eq:DAE} is solvable and the solution is unique for every consistent initial condition.

The control problem \eqref{eq:DAE} is often studied in the behavior framework \cite{PolW98}, see for instance \cite{KunM01,CamKM12}. Hereby, a new variable $\xi = [\state, u]$ is introduced that includes the state and control variable such that the problem is reduced to the analysis of an underdetermined DAE \cite{KunM01}, i.\,e., the meaning of the variables is not distinguished any more. One big advantage of this formalism is that the analysis determines the free variables in the system, which might not be the original control variables, and hence need to be reinterpreted.
Since our main goal is to study the IVP~\eqref{eq:IVP} with a prescribed input function $u$ this viewpoint is not possible. For given $u$ we can study the restricted problem
\begin{equation}
	\label{eq:rDAE}
	\widetilde{F}(t,\state(t),\dot{\state}(t)) = 0,\qquad \state(t_0) = \state_0,
\end{equation}
with $\widetilde{F}(t,\state,\dot{\state}) = F(t,\state,\dot{\state},u)$. 

If the partial derivative $\tfrac{\partial}{\partial \dot{\state}} \widetilde{F}$ is singular, then the solution $\state$ of \eqref{eq:rDAE} may depend on derivatives of $\widetilde{F}$. The difficulties arising with these differentations are classified by so called \emph{index} concepts (cf.~\cite{Meh15} for a survey). In this paper, we make use of the \emph{strangeness index} concept \cite{KunM06}, which is -- roughly speaking -- a generalization of the \emph{differentiation index} \cite{BreCP96} to under- and overdetermined systems. The advantage of the strangeness index is that it preserves the algebraic constraints in the system, which in turn prevents numerical methods to drift away from the solution manifold \cite{KunM04}. The strangeness index is based on the \emph{derivative array} \cite{Cam87a} of level $\ell$, defined as
\begin{equation}
	\label{eq:derivativeArray}
	\wcalD_\ell\left(t,\state,\eta\right) \vcentcolon= \begin{bmatrix}
		\widetilde{F}(t,\state,\dot{\state})\\
		\ddt\widetilde{F}(t,\state,\dot{\state})\\
		\vdots\\
		\left(\ddt\right)^{\!\ell} \widetilde{F}(t,\state,\dot{\state})
	\end{bmatrix}\in\R^{(\ell+1)\stateDim}\qquad\text{with}\ \eta\vcentcolon=[\dot{\state};\ldots;\state^{(\mu+1)}].
\end{equation}
\begin{hypothesis}
	\label{hyp:1}
	There exist integers $\mu$ and $a$ such that the set
	\begin{displaymath}
		\wcalM_\mu \vcentcolon= \left\{\left(t,\state,\eta\right)\in\R^{(\mu+2)\stateDim+1}\ \big|\ \wcalD_{\mu}\left(t,\state,\eta\right) = 0\right\}
	\end{displaymath}
	associated with $\widetilde{F}$ is nonempty and such that for every $(t_0,\state_0,\eta_0)\in\wcalM_{\mu}$, there exists a (sufficiently small) neighborhood $\widetilde{\calU}$ in which the following properties hold:
	\begin{enumerate}
		\item We have $\rank(\tfrac{\partial}{\partial \eta} \wcalD_{\mu}) = (\mu+1)\stateDim-a$ on $\wcalM_{\mu}\cap\widetilde{\calU}$ such that there exists a smooth matrix function $\tZa$ of size $(\mu+1)\stateDim\times a$ and pointwise maximal rank that satisfies $\tZa^T \tfrac{\partial}{\partial \eta} \wcalD_{\mu}=0$.
		\item We have $\rank(\tZa^T \tfrac{\partial}{\partial \state}\wcalD_{\mu}) = a$ on $\wcalM_{\mu}\cap\widetilde{\calU}$ such that there exists a smooth matrix function $\tTa$ of size $\stateDim\times d$ with $d \vcentcolon= \stateDim-a$ and pointwise maximal rank, satisfying $\tZa^T \left(\tfrac{\partial}{\partial \state} \wcalD_{\mu}\right) \tTa = 0$.
		\item We have $\rank(\tfrac{\partial \widetilde{F}}{\partial\dot{\state}} \tTa) = d$ on $\wcalM_{\mu}\cap\widetilde{\calU}$ such that there exists a smooth matrix function $\tZd$ of size $\stateDim\times d$ and pointwise maximal rank, satisfying $\rank(\tZd^T \tfrac{\partial\widetilde{F}}{\partial\dot{\state}} \tTa) = d$.
	\end{enumerate}
\end{hypothesis}

The smallest possible $\mu$ for which \Cref{hyp:1} is satisfied is called \emph{strangeness index} of the DAE~\eqref{eq:rDAE}. If \Cref{hyp:1} is satisfied with $\mu=0$, then the DAE~\eqref{eq:rDAE} is called \emph{strangeness-free}. The quantities $a$ and $d$ are, respectively, the numbers of differential and algebraic equations contained in the DAE~\eqref{eq:rDAE}. Using the matrix functions $\tZd$ and $\tZa$, the DAE~\eqref{eq:rDAE} can (locally) be reformulated as
\begin{subequations}
	\label{eq:sFree}
	\begin{align}
		\label{eq:sFreeDifferential} 0 &= \widetilde{D}(t,\state,\dot{\state}) \vcentcolon= \left(\tZd^T \widetilde{F}\right)(t,\state,\dot{\state}),\\
		\label{eq:sFreeAlgebraic} 0 &= \widetilde{A}(t,\state) \vcentcolon= \left(\tZa^T\wcalD_{\mu}\right)(t,\state),
	\end{align}
\end{subequations}
which itself is strangeness-free and every solution of \eqref{eq:rDAE} also solves \eqref{eq:sFree}. Hereby we call \eqref{eq:sFreeDifferential} the \emph{differential part} of \eqref{eq:rDAE} and \eqref{eq:sFreeAlgebraic} the \emph{algebraic part}. Note that although $\tZa$ and $\wcalD_{\mu}$ may depend on derivatives of $\state$ it can be shown (cf.~\cite{KunM98}) that their product only depends on $t$ and $\state$. Unfortunately, a solution of \eqref{eq:sFree} is not necessarily a solution of \eqref{eq:rDAE}. However, if we assume in addition, that \Cref{hyp:1} is satisfied with characteristic values $\mu,a,d$ and $\mu+1,a,d$, then for every initial value $\state_{\mu+1,0}\in\calM_{\mu+1}$ there exists a unique solution of \eqref{eq:sFree} and this solution (locally) solves \eqref{eq:rDAE} (see \cite[Theorem~4.13]{KunM06}). As a direct consequence, an initial value $\state_0$ is consistent if and only if it is contained in the \emph{consistency set}
\begin{equation}
	\label{eq:DAE:consistencySet}
	(t_0,\state_0) \in \widetilde{\mathbb{M}} \vcentcolon= \left\{(t,\state)\in\R^{\stateDim+1} \mid \widetilde{A}(t,\state) = 0\right\}.
\end{equation}
If state transformations are allowed, then the implicit function theorem allows us to (locally) rewrite the strangeness-free DAE~\eqref{eq:sFree} as
\begin{equation}
	\label{eq:sFreeExplicit}
	\dot{\xi} = \widetilde{\mathcal{L}}(t,\xi), \qquad
	\zeta = \widetilde{\mathcal{R}}(t,\xi)
\end{equation}
with $\xi(t)\in\R^d$ and $\zeta(t)\in\R^a$. For the detailed derivation we refer to \cite[Cha.~4.1]{KunM06}. Let $\state = \mathcal{T}(t,\xi,\zeta)$ denote the transformation for the state. Then, the ordinary differential equation (ODE)
\begin{equation}
	\label{eq:uODE}
	\dot{\state} = \widetilde{\mathfrak{f}}(t,\state) \vcentcolon= \mathcal{T}\left(t, \widetilde{\mathcal{L}}(t,\xi),
		\left(\tfrac{\partial}{\partial \xi} \widetilde{\mathcal{R}}\right)(t,\xi)\widetilde{\mathcal{L}}(t,\xi) + \left(\tfrac{\partial}{\partial t}\widetilde{\mathcal{R}}\right)(t,\xi)\right),
\end{equation}
is called the \emph{underlying ODE} for the DAE \eqref{eq:rDAE} and is the basis of the \emph{differentiation index} \cite{BreCP96}, which is defined as $\mu+1$ if $\frac{\partial}{\partial\dot{\state}}\widetilde{F}$ is singular and $0$ otherwise \cite[Cor.~3.46]{KunM06}. 

If we want to solve the DAE \eqref{eq:DAE} numerically, we are not only interested in the existence of solutions but also that the solution of the initial value problem~\eqref{eq:rDAE} is unique and depends continuously on the data. For DAEs, the so-called \emph{well-posedness} can be be formulated as follows \cite[Theorem~4.12]{KunM06}.

\begin{theorem}
	\label{thm:wellPosednessDAE}
	Let $\widetilde{F}$ as in \eqref{eq:rDAE} be sufficiently smooth and satisfy \Cref{hyp:1}. Let $\state^\star\in\mathcal{C}^1(\timeInt,\mathbb{R}^{\stateDim})$ be a sufficiently smooth solution of \eqref{eq:DAE}. Let the (nonlinear) operator $\widetilde{\mathcal{F}}\colon \mathbb{D}\to\mathbb{Y}$, $\mathbb{D}\subseteq\mathbb{Z}$ open, be defined by
	\begin{equation}
		\label{eq:wellPosednessOperator}
		\widetilde{\mathcal{F}}(\state)(t) = \begin{bmatrix}
			\dot{\xi} - \widetilde{\mathcal{L}}(t,\xi(t))\\
			\zeta - \widetilde{\mathcal{R}}(t,\xi(t))
		\end{bmatrix},
	\end{equation}
	with the Banach spaces
	\begin{displaymath}
		\mathbb{Z} \vcentcolon= \{z\in \calC(\timeInt,\R^{\stateDim} \mid \xi\in\calC^1(\timeInt,\R^{d}), \xi(t_0) = 0\},\qquad \mathbb{Y} \vcentcolon= \calC(\timeInt,\R^{\stateDim})
	\end{displaymath}
	according to \eqref{eq:sFreeExplicit}. Then $\state^\star$ is a regular solution of the strangeness-free problem
	\begin{displaymath}
		\widetilde{\mathcal{F}}(\state) = 0
	\end{displaymath}
	in the following sense. There exists a neighborhood $\calU_\state\subseteq \mathbb{Z}$ of $\state^\star$ and a neighborhood $\mathcal{V}\subseteq\mathbb{Y}$ of the origin such that for every $f\in\mathcal{V}$ the equation
	\begin{displaymath}
		\widetilde{\mathcal{F}}(\state) = f
	\end{displaymath}
	has a unique solution $\state\in\calU_\state$ that depends continuously on $f$. In particular, $\state^\star$ is the unique solution in $\calU_\state$ belonging to $f = 0$.
\end{theorem}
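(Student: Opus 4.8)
The plan is to read this statement as a direct application of the inverse function theorem for maps between Banach spaces. First I would observe that, since $\state^\star$ solves the DAE \eqref{eq:rDAE}, it also solves the strangeness-free reformulation, so in the transformed coordinates its differential and algebraic parts satisfy $\dot\xi^\star = \widetilde{\mathcal{L}}(t,\xi^\star)$ and $\zeta^\star = \widetilde{\mathcal{R}}(t,\xi^\star)$; hence $\widetilde{\mathcal{F}}(\state^\star)=0$. It therefore suffices to prove that $\widetilde{\mathcal{F}}\colon\mathbb{D}\to\mathbb{Y}$ is continuously Fréchet differentiable on a neighborhood of $\state^\star$ and that its derivative $D\widetilde{\mathcal{F}}(\state^\star)$ is a bounded linear isomorphism $\mathbb{Z}\to\mathbb{Y}$. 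The inverse function theorem then produces a neighborhood $\calU_\state\subseteq\mathbb{Z}$ of $\state^\star$ and a neighborhood $\mathcal{V}\subseteq\mathbb{Y}$ of the origin on which $\widetilde{\mathcal{F}}$ is a $\calC^1$-diffeomorphism onto an open set containing $0$, which is precisely the asserted local existence, uniqueness, and continuous dependence.

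For the differentiability I would pass to the coordinates $(\xi,\zeta)$ through the local diffeomorphism $\state=\mathcal{T}(t,\xi,\zeta)$, so that $\widetilde{\mathcal{F}}$ becomes a composition of the bounded linear maps $\xi\mapsto\dot\xi$ and $\zeta\mapsto\zeta$ with the superposition (Nemytskii) operators generated by the smooth functions $\widetilde{\mathcal{L}}$ and $\widetilde{\mathcal{R}}$. Using that $\widetilde{\mathcal{L}},\widetilde{\mathcal{R}}$ are sufficiently smooth and that $\xi$ ranges over $\calC^1(\timeInt,\R^d)$ while $\zeta$ ranges over $\calC(\timeInt,\R^a)$, the standard differentiability result for superposition operators on spaces of continuous functions gives that $\widetilde{\mathcal{F}}$ is continuously Fréchet differentiable, with
\begin{displaymath}
	D\widetilde{\mathcal{F}}(\state^\star)\begin{bmatrix}\delta\xi\\\delta\zeta\end{bmatrix}(t) = \begin{bmatrix}\dot{\delta\xi}(t) - \left(\tfrac{\partial}{\partial\xi}\widetilde{\mathcal{L}}\right)(t,\xi^\star(t))\,\delta\xi(t)\\ \delta\zeta(t) - \left(\tfrac{\partial}{\partial\xi}\widetilde{\mathcal{R}}\right)(t,\xi^\star(t))\,\delta\xi(t)\end{bmatrix}.
\end{displaymath}

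The heart of the argument is the bounded invertibility of this derivative. Given a right-hand side $[g_1;g_2]\in\mathbb{Y}$ with $g_1\in\calC(\timeInt,\R^d)$ and $g_2\in\calC(\timeInt,\R^a)$, the first block is the linear initial value problem $\dot{\delta\xi} = \bigl(\tfrac{\partial}{\partial\xi}\widetilde{\mathcal{L}}\bigr)(t,\xi^\star)\,\delta\xi + g_1$ subject to $\delta\xi(t_0)=0$, where the initial condition is exactly the normalization encoded in the definition of $\mathbb{Z}$. Since the coefficient matrix is continuous on the compact interval $\timeInt$, variation of constants yields a unique solution $\delta\xi\in\calC^1(\timeInt,\R^d)$, and the associated solution operator is bounded from $\calC(\timeInt,\R^d)$ into $\calC^1(\timeInt,\R^d)$. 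The second block then determines $\delta\zeta = g_2 + \bigl(\tfrac{\partial}{\partial\xi}\widetilde{\mathcal{R}}\bigr)(t,\xi^\star)\,\delta\xi$ explicitly and continuously. This exhibits $D\widetilde{\mathcal{F}}(\state^\star)$ as a continuous bijection between Banach spaces, so the bounded inverse theorem shows it is an isomorphism, completing the hypotheses of the inverse function theorem.

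The linear ODE solvability and the algebraic back-substitution are routine; I expect the \textbf{main obstacle} to be the careful verification of continuous Fréchet differentiability of the superposition operators on the mixed-regularity space $\mathbb{Z}$, in which only the differential component $\xi$ is required to be $\calC^1$. This is where the smoothness assumption on $\widetilde{F}$ (inherited by $\widetilde{\mathcal{L}}$ and $\widetilde{\mathcal{R}}$ through \eqref{eq:sFreeExplicit}) is genuinely used, and it must be combined with bookkeeping that transfers the norm estimates through the nonlinear state transformation $\mathcal{T}$. Everything after that — in particular that the normalization $\xi(t_0)=0$ holds for both $\state^\star$ and the nearby perturbed solutions, so that the increment automatically vanishes at $t_0$ — is a matter of assembling the pieces.
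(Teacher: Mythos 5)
The paper does not prove this theorem itself — it is quoted directly from \cite[Thm.~4.12]{KunM06} — and your argument reproduces the standard proof given in that reference: Fréchet differentiability of the superposition operators generated by $\widetilde{\mathcal{L}}$ and $\widetilde{\mathcal{R}}$ on the mixed-regularity space, bounded invertibility of the linearization via variation of constants for the linear IVP with $\delta\xi(t_0)=0$ together with the explicit back-substitution for $\delta\zeta$, and the Banach-space inverse function theorem. Your proposal is correct and takes essentially the same route as the cited source.
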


In order to apply the theory to the original equation~\eqref{eq:DAE} we have to ensure that the characteristic values $\mu$, $a$, and $d$ do not depend on the chosen input $u$. A simple way to guarantee this, is to ensure that the rank assumptions in \Cref{hyp:1} hold for all sufficiently smooth input functions. The derivative array~\eqref{eq:derivativeArray} with explicit dependency on $u$ takes the form
\begin{equation*}
	\label{eq:derivativeArray2}
	\calD_\ell\left(t,\state,\eta,u,\dot{u},\ldots,u^{(\ell)}\right) \vcentcolon= \begin{bmatrix}
		F(t,\state,\dot{\state},u)\\
		\ddt F(t,\state,\dot{\state},u)\\
		\vdots\\
		\left(\ddt\right)^{\!\ell} F(t,\state,\dot{\state},u)
	\end{bmatrix}\in\R^{(\ell+1)\stateDim}\qquad\text{with}\ \eta\vcentcolon=[\dot{\state};\ldots;\state^{(\mu+1)}].
\end{equation*}

\begin{hypothesis}
	\label{hyp:2}
	There exist integers $\mu$ and $a$, and matrix functions $\Za(\cdot)\in \R^{(\mu+1)\stateDim\times a}$, $\Ta(\cdot)\in\R^{\stateDim\times d}$, and $\Zd(\cdot)\in\R^{\stateDim\times d}$ with pointwise maximal rank and $d\vcentcolon= n-a$ such that for every sufficiently smooth $u$ the set
	\begin{displaymath}
		\calM_\mu \vcentcolon= \left\{\left(t,\state,\eta,u,\ldots,u^{(\mu)}\right)\in\R^{(\mu+2)\stateDim+(\mu+1)m+1}\ \big|\ \calD_{\mu}\left(t,\state,\eta,u,\ldots,u^{(\mu)}\right) = 0\right\}
	\end{displaymath}
	associated with $F$ is nonempty and such that for every $(t_0,\state_0,\eta_0,u_0,\ldots,u_0^{(\mu)})\in\calM_{\mu}$, there exists a (sufficiently small) neighborhood $\calU$ in which the following properties hold:
	\begin{enumerate}
		\item We have $\rank(\tfrac{\partial}{\partial \eta} \calD_{\mu}) = (\mu+1)\stateDim-a$ and $\Za^T \tfrac{\partial}{\partial \eta} \calD_{\mu}=0$ on $\calM_{\mu}\cap\calU$.
		\item We have $\rank(\Za^T \tfrac{\partial}{\partial \state}\calD_{\mu}) = a$ and $\Za^T \left(\tfrac{\partial}{\partial \state} \calD_{\mu}\right) \Ta = 0$ on $\calM_{\mu}\cap\calU$.
		\item We have $\rank(\tfrac{\partial F}{\partial\dot{\state}} \Ta) = d$ and $\rank(\Zd^T \tfrac{\partial F}{\partial\dot{\state}} \Ta) = d$ on $\calM_{\mu}\cap\calU$.
	\end{enumerate}
\end{hypothesis}

\begin{remark}
	Note that similarly as in \Cref{hyp:1} the existence of the matrix functions $\Za$, $\Ta$, and $\Zd$ in \Cref{hyp:2} follows from the constant rank assumptions and a smooth version of the singular value decomposition as in~\cite[Thm.~3.9 and Thm.~4.3]{KunM06}.
\end{remark}

\begin{example}
	It is easy to see that $\check{F}$ in \Cref{ex:PMSD:firstOrder} satisfies \Cref{hyp:2} with $\mu=0$. The equation for the pendulum, summarized in $\hat{F}$, are in Hessenberg-form and therefore satisfy \Cref{hyp:2} with strangeness index $\mu=2$ \cite[Thm.~4.23]{KunM06}.
\end{example}

Following the analysis in \cite{KunM98} that leads to the strangeness-free formulation~\eqref{eq:sFree} we observe that the functions $D$ and $A$ may depend on $u$ and its derivatives. Due to the local character of \Cref{hyp:2} we can  assume that $D$ does not depend on derivatives of $u$. In any case, \Cref{hyp:2} yields the (local) reformulation
\begin{subequations}
	\label{eq:sFree:control}
	\begin{align}
		\label{eq:sFreeDifferential:control} 0 &= D(t,\state,\dot{\state},u) \vcentcolon= \left(\Zd^T F\right)(t,\state,\dot{\state},u),\\
		\label{eq:sFreeAlgebraic:control} 0 &= A\left(t,\state,u,\dot{u},\ldots,u^{(\mu)}\right) \vcentcolon= \left(\Za^T\calD_{\mu}\right)\left(t,\state,u,\dot{u},\ldots,u^{(\mu)}\right),
	\end{align}
\end{subequations}
which itself is strangeness-free. The corresponding explicit form~\eqref{eq:sFreeExplicit} and the underlying ODE~\eqref{eq:uODE} therefore take the form
\begin{equation}
	\label{eq:sFreeExplicit:control}
	\dot{\xi} = \mathcal{L}(t,\xi,u), \qquad
	\zeta = \mathcal{R}(t,\xi,u,\dot{u},\ldots,u^{(\mu)})
\end{equation}
and
\begin{equation}
	\label{eq:uODE:control}
	\dot{\state} = \mathfrak{f}\left(t,\state,u,\ldots,u^{(\mu+1)}\right).
\end{equation}
Clearly, if a system satisfies \Cref{hyp:2}, then it also satisfies \Cref{hyp:1} (with given~$u$) and thus all previous results hold as well.
\begin{remark}
	Although derivatives of $u$ up to order $\mu$, respectively $\mu+1$ appear in the algebraic equation~\eqref{eq:sFreeAlgebraic:control}, the explicit algebraic equation~\eqref{eq:sFreeExplicit:control}, and the underlying ODE~\eqref{eq:uODE:control}, respectively, it is not necessary, that they actually depend on it. %\BUcomment{Noch schöner formulieren.} 
\end{remark}
In the linear case, i.e., when \eqref{eq:DAE} takes the form
\begin{equation}
	\label{eq:DAE:linear}
	E\dot{\state} = A\state + Bu + f
\end{equation}
with matrices $E,A\in\R^{\stateDim\times\stateDim}$, $B\in\R^{\stateDim\times m}$ and external forcing $f$, the analysis reduces to the analysis of the matrix pencil $(E,A)$. We say the $(E,A)$ is \emph{regular}, if $\det(sE-A)\in\R[s]\setminus\{0\}$ is not the zero polynomial. Otherwise, $(E,A)$ is called \emph{singular}. In this case, the Weierstra\ss{} canonical form \cite{Gan59b} separates the algebraic and differential part.
\begin{theorem}[Weierstra\ss{} canonical form]
	\label{thm:WCF}
	The matrix pencil $(E,A)\in \left(\R^{\stateDim\times\stateDim}\right)^2$ is regular if and only if there exist matrices nonsingular matrices $\WCFL,\WCFR\in\R^{\stateDim\times\stateDim}$ such that
	\begin{equation}
		\label{eq:WCFDAE}
		\WCFL E\WCFR= \begin{bmatrix}
			I_{\ndif} & 0\\
			0 & N
		\end{bmatrix}\qquad\text{and}\qquad 
		\WCFL A\WCFR = \begin{bmatrix}
			J & 0 \\
			0 & I_{\nalg}\end{bmatrix},
	\end{equation}
	where the matrices $J\in\R^{\ndif\times\ndif}$ and $N\in\R^{\nalg\times\nalg}$ are in Jordan canonical form and $N$ is nilpotent. 
\end{theorem}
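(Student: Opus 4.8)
The statement is an equivalence, so the plan is to treat the two implications separately: the converse (``if'') direction is a one-line determinant computation, while the forward (``only if'') direction carries all the content. For the converse I would assume the block decomposition exists and simply compute, using $\WCFL E\WCFR$ and $\WCFL A\WCFR$ from \eqref{eq:WCFDAE},
\[
	\WCFL(s E - A)\WCFR = \begin{bmatrix} sI_{\ndif} - J & 0 \\ 0 & sN - I_{\nalg}\end{bmatrix}.
\]
Taking determinants gives $\det(\WCFL)\det(\WCFR)\det(sE-A) = \det(sI_{\ndif}-J)\cdot\det(sN-I_{\nalg})$. The first factor is the characteristic polynomial of $J$, hence monic of degree $\ndif$, and since $N$ is nilpotent the matrix $sN$ has only the eigenvalue $0$, so $\det(sN-I_{\nalg}) = (-1)^{\nalg}$. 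The product is therefore a nonzero element of $\R[s]$, whence $(E,A)$ is regular.

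For the forward direction, assume $(E,A)$ is regular, so $p(s) \vcentcolon= \det(sE-A)\in\R[s]$ is nonzero and has only finitely many real roots. I would first apply a \emph{regular-point shift}: choose $\lambda\in\R$ with $p(\lambda)\neq 0$, so that $\lambda E - A$ is invertible, and set $W\vcentcolon=(\lambda E - A)^{-1}$ together with the single matrix $M\vcentcolon= WE$. The identity $A = \lambda E - (\lambda E - A)$ then yields $WA = \lambda M - I$, so that both $WE$ and $WA$ are expressed through $M$ alone. This reduction of the pencil to one matrix is the conceptual key.

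Next I would invoke the (real) Jordan canonical form of $M$: there is an invertible $P$ with $P^{-1}MP = \begin{bmatrix} M_1 & 0 \\ 0 & M_0\end{bmatrix}$, where $M_1\in\R^{\ndif\times\ndif}$ collects the Jordan blocks with nonzero eigenvalue (hence invertible) and $M_0\in\R^{\nalg\times\nalg}$ the blocks with eigenvalue $0$ (hence nilpotent), and $\ndif+\nalg = \stateDim$. Putting $\WCFR\vcentcolon= P$ gives $P^{-1}WEP = \operatorname{diag}(M_1,M_0)$ and $P^{-1}WAP = \operatorname{diag}(\lambda M_1 - I,\lambda M_0 - I)$. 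I would then left-multiply by the block-diagonal matrix $\operatorname{diag}(M_1^{-1},(\lambda M_0 - I)^{-1})$, which is invertible because $M_1$ is invertible and $\lambda M_0 - I$ has all eigenvalues $-1$ (as $M_0$ is nilpotent). Absorbing this factor into $\WCFL$, one obtains exactly $\WCFL E\WCFR = \begin{bmatrix} I_{\ndif} & 0 \\ 0 & N\end{bmatrix}$ and $\WCFL A\WCFR = \begin{bmatrix} J & 0 \\ 0 & I_{\nalg}\end{bmatrix}$ with $J\vcentcolon= \lambda I - M_1^{-1}$ and $N\vcentcolon=(\lambda M_0 - I)^{-1}M_0$.

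The main obstacle, and the only substantive verification remaining, is to confirm that $N$ is nilpotent: expanding $(\lambda M_0 - I)^{-1} = -\sum_{k\geq 0}(\lambda M_0)^k$ (a finite sum, since $M_0$ is nilpotent) shows that $N = -\sum_{k\geq 0}\lambda^k M_0^{k+1}$ is a polynomial in $M_0$ with vanishing constant term, hence nilpotent. A final similarity transformation applied within each diagonal block independently then brings $J$ and $N$ into Jordan canonical form without disturbing the identity block in the other position, completing the construction. The remaining care is only bookkeeping: keeping $\WCFL,\WCFR$ real is guaranteed by choosing $\lambda\in\R$ and using the real Jordan form throughout (or, if one prefers the complex Jordan form, by complexifying and descending at the end, the real structure being preserved because the data are real).
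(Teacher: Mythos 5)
Your proof is correct. Note, however, that the paper does not actually prove this theorem: it is the classical Weierstra\ss{} canonical form, quoted with a citation to Gantmacher, so there is no in-paper argument to compare against. What you give is the standard self-contained ``shift-and-invert'' proof: the easy direction by the determinant computation $\det(sI_{\ndif}-J)\cdot\det(sN-I_{\nalg})=\pm(\text{monic of degree }\ndif)$, and the hard direction by picking $\lambda$ with $\lambda E-A$ invertible, reducing the pencil to the single matrix $M=(\lambda E-A)^{-1}E$, splitting off the nilpotent part of $M$ via its Jordan decomposition, and normalizing by the block-diagonal left factor $\operatorname{diag}\bigl(M_1^{-1},(\lambda M_0-I)^{-1}\bigr)$. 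All the individual verifications check out, including the one that genuinely needs an argument, namely that $N=(\lambda M_0-I)^{-1}M_0$ is nilpotent (your Neumann-series expansion shows it is a polynomial in $M_0$ without constant term), and the observation that a block-diagonal similarity $\operatorname{diag}(Q_1,Q_2)$ fixes the identity blocks while putting $J$ and $N$ into canonical form. The only point worth flagging is the one you already flag yourself: over $\R$ the matrix $J$ can only be brought to \emph{real} Jordan form when $M_1$ has nonreal eigenvalues, so the statement's ``Jordan canonical form'' for $J$ must be read in that sense (for $N$ this is vacuous, since a nilpotent real matrix has a real Jordan form). Your existence argument for a real $\lambda$ with $\det(\lambda E-A)\neq 0$ is also fine, since a nonzero real polynomial has finitely many roots.
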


One can show that the index of nilpotency $\nu\in\mathbb{N}$ of $n$, i.e., the unique number $\nu$ that satisfies $N^\nu = 0$ and $N^{\nu-1}\neq 0$, does not depend on the particular choice of $\WCFL$ and $\WCFR$ and thus is called the \emph{index}.
The Weierstra\ss{} canonical form separates the DAE~\eqref{eq:DAE:linear} into the differential equation
\begin{displaymath}
	\dot{\state}_{\mathrm{d}} = J\state_{\mathrm{d}} + B_{\mathrm{d}}u + f_{\mathrm{d}},
\end{displaymath}
which for continuous $u$ and $f_{\mathrm{d}}$ is uniquely solvable for any initial condition $\state_{\mathrm{d}}(0) = \state_{\mathrm{d},0}$, and the algebraic equation
\begin{equation}
	\label{eq:DAE:linear:algebraicEquation}
	N\dot{\state}_{\mathrm{a}} = \state_{\mathrm{a}} +  B_{\mathrm{a}}u + f_{\mathrm{a}}.
\end{equation}
Hereby, we use $
	\begin{smallbmatrix}
		\state_{\mathrm{d}}\\
		\state_{\mathrm{a}}
	\end{smallbmatrix} \vcentcolon= \WCFR^{-1}\state$, $
	\begin{smallbmatrix}
		B_{\mathrm{d}}\\
		B_{\mathrm{a}}
	\end{smallbmatrix} \vcentcolon= \WCFL B$, and $
	\begin{smallbmatrix}
		f_{\mathrm{d}}\\
		f_{\mathrm{a}}
	\end{smallbmatrix} \vcentcolon= \WCFL f$. It is easy to see (cf.~\cite[Lem.~2.8]{KunM06}) that the unique solution of \eqref{eq:DAE:linear:algebraicEquation} is given by
\begin{equation}
	\label{eq:DAE:linear:algebraic:Sol}
	\state_{\mathrm{a}} = -\sum_{j=0}^{\nu-1} N^i\left(B_{\mathrm{a}}u^{(j)} + f_{\mathrm{a}}^{(j)}\right),
\end{equation}
provided that $u$ and $f_{\mathrm{a}}$ are sufficiently smooth.  
In particular, an initial condition $\state_{\mathrm{a}}(0) = \state_{\mathrm{a},0}$ is consistent if and only if it satisfies~\eqref{eq:DAE:linear:algebraic:Sol}, i.e., in the linear case we consistency set~\eqref{eq:DAE:consistencySet} is given by
\begin{displaymath}
	\mathbb{M} \vcentcolon= \left\{(t,\state)\in\R^{\stateDim+1}\,\left|\, \state_{\mathrm{a}}(t) = -\sum_{j=0}^{\nu-1} N^i\left(B_{\mathrm{a}}u^{(j)}(t) + f^{(j)}(t)\right)\right.\right\}.
\end{displaymath}
We observe that the explicit solution formula~\eqref{eq:DAE:linear:algebraic:Sol} is obtained by differentiating~\eqref{eq:DAE:linear:algebraicEquation} $\nu-1$ times. Another differentiation therefore allows us to rewrite the DAE~\eqref{eq:DAE:linear} as an ODE, showing that the index (of nilpotency) and the differentiation index coincide. In particular, we conclude \cite[Cor.~3.46]{KunM06}
\begin{displaymath}
	\nu = \begin{cases}
		0, & \text{if } a = 0,\\
		\mu + 1, & \text{if } a > 0,
	\end{cases}
\end{displaymath}
where $a$ is the characteristic quantity defined in \Cref{hyp:2}.

\begin{remark}
	The transformation of a matrix pencil with nonsingular matrices as in \Cref{thm:WCF} defines an equivalence relation \cite[Lem.~2.2]{KunM06}. We write $(E,A) \sim (\tilde{E},\tilde{A})$ if there exists nonsingular matrices $\WCFL,\WCFR$ with
	$\tilde{E} = \WCFL E\WCFR$ and $\tilde{A} = \WCFL A\WCFR$.
\end{remark}
Before we continue our discussion let us emphasize that in general, there is no relation between the regularity of the subsystems~\eqref{eq:DAE:numerical} and~\eqref{eq:DAE:experimental} and the regularity of the coupled system~\eqref{eq:completeModel}. Also the index from the subsystems might differ from the index of the coupled system. As an immediate consequence, the splitting of the system into smaller subsystems is a delicate task that has to be performed carefully.
\begin{example}
	\label{ex:subsystems:Index:1}
	Consider the linear DAE
	\begin{equation}
		\label{eq:exSub:totalSys}
		\begin{bmatrix}
			1 & 0 & 0\\
			0 & 0 & 0\\
			0 & 0 & 0
		\end{bmatrix}\begin{bmatrix}
			\dot{x}_1\\\dot{x}_2\\\dot{x}_3
		\end{bmatrix} = \begin{bmatrix}
			0 & c & 0\\
			c & 0 & 1\\
			0 & 1 & -1
		\end{bmatrix}\begin{bmatrix}
			x_1\\x_2\\x_3
		\end{bmatrix} + \begin{bmatrix}
			f_1\\f_2\\f_3
		\end{bmatrix}
	\end{equation}
	with external forcing function $f = [f_1, f_2, f_3]$ and parameter $c\in\R$. It is easy to see that for any $c\in\R$ the system has differentiation index $\nu = 1$. Splitting the system into $z_1 = [x_1, x_2]$ and $z_2 = x_3$ we obtain the two subsystems
	\begin{subequations}
		\begin{align}
			\label{eq:exSub:sub:1}\begin{bmatrix}
				1 & 0\\
				0 & 0
			\end{bmatrix}\begin{bmatrix}
				\dot{x}_1\\
				\dot{x}_2
			\end{bmatrix} &= \begin{bmatrix}
				0 & c\\
				c & 0
			\end{bmatrix}\begin{bmatrix}
				x_1\\
				x_2
			\end{bmatrix} + \begin{bmatrix}
				0\\
				1
			\end{bmatrix}u_1 + \begin{bmatrix}
				f_1\\f_2
			\end{bmatrix}\\
			\label{eq:exSub:sub:2}0 &= -x_3 + u_2 + f_3.
		\end{align}
	\end{subequations}
	The second subsystem~\eqref{eq:exSub:sub:2} has differentiation index $\nu = 1$. For the first subsystem~\eqref{eq:exSub:sub:1} we observe that for $c=0$ the pencil of the DAE is singular. For $c\neq 0$ the pencil is regular with index $\nu = 2$, which is bigger than the index of the coupled system.
\end{example}

\begin{example}
	\label{ex:coupledSfreeSystems}
	For $i=1,2$ we consider the subsystems
	\begin{displaymath}
		\begin{bmatrix}
			1 & 0\\
			0 & 0
		\end{bmatrix}\dot{\state}_i = \begin{bmatrix}
			a_i & 0\\
			0 & 1
		\end{bmatrix}\state_i + \begin{bmatrix}
			b_{i,1} & b_{i,2}\\
			c_{i,1} & c_{i,2}
		\end{bmatrix}u_i,
	\end{displaymath}
	which are already in Weierstra\ss{} canonical form \eqref{eq:WCFDAE} with index $\nu=1$. The coupled system with coupling relations $u_1 = \state_2$ and $u_2 = \state_1$ is given by the linear DAE $E\dot{\state} = A\state$ with
	\begin{displaymath}
		E \vcentcolon=
		\begin{bmatrix}
			1 & 0 & 0 & 0\\
			0 & 0 & 0 & 0\\
			0 & 0 & 1 & 0\\
			0 & 0 & 0 & 0
		\end{bmatrix}, \qquad A \vcentcolon= \begin{bmatrix}
			a_1 & 0 & b_{1,1} & b_{1,2}\\
			0 & 1 & c_{1,1} & c_{1,2}\\
			b_{2,1} & b_{2,2} & a_2 & 0\\
			c_{2,1} & c_{2,2} & 0 & 1
		\end{bmatrix},\qquad \text{and}\qquad
		\state \vcentcolon= \begin{bmatrix}
			\dot{\state}_1\\\dot{\state}_2
		\end{bmatrix}.
	\end{displaymath}
	From 
	\begin{displaymath}
		(E,A) \sim \left(\begin{bmatrix}
			1 & 0 & 0 & 0\\
			0 & 1 & 0 & 0\\
			0 & 0 & 0 & 0\\
			0 & 0 & 0 & 0
		\end{bmatrix}, \begin{bmatrix}
			a_1 & b_{1,1} & 0 & b_{1,2}\\
			b_{2,1} & a_2 & b_{2,2} & 0\\
			0 & c_{1,1} & 1 & c_{1,2}\\
			c_{2,1} & 0 & c_{2,2} & 1
		\end{bmatrix}\right)
	\end{displaymath}
	we immediately observe that $(E,A)$ has differentiation index $\nu=1$ if and only if $c_{1,2}c_{2,2} \neq 1$. Otherwise, we obtain
	\begin{displaymath}
		(E,A) \sim \left(\begin{bmatrix}
			1 & 0 & 0 & 0\\
			0 & 1 & 0 & 0\\
			0 & 0 & 0 & 0\\
			0 & 0 & 0 & 0
		\end{bmatrix}, \begin{bmatrix}
			a_1 & b_{1,1} & b_{1,2} & 0\\
			b_{2,1} & a_2-c_{1,1}b_{2,1} & - b_{21}c_{12} & 0\\
			c_{2,1} & -c_{1,1}c_{2,2} & 0 & 0\\
			0 & 0 & 0 & 1
		\end{bmatrix}\right)
	\end{displaymath}
	showing that also $\nu=2$, $\nu=3$, and $(E,A)$ singular are possible.
\end{example}

\begin{remark}
	\label{rem:pH}
	If both subsystems are port-Hamiltonian systems \cite{BeaMXZ18}, then, under reasonable conditions, the coupled system itself is again a port-Hamiltonian system. In this case, \cite[Thm.~4.3]{MehMW18} implies that the differentiation index of the coupled system is at most $\nu=2$.
\end{remark}

\section{The method of steps}
\label{sec:MOS}

The standard procedure to solve initial trajectory problems for delay equations is via successive integration on the time intervals $[(i-1)\delay,i\delay)$ with $i=1,\ldots,\maxMOS$, where $\maxMOS\in\mathbb{N}$ is the smallest integer such that $T \leq \maxMOS\delay$. This approach is commonly referred to as the \emph{method of steps} \cite{HaM16}, see also \cite{BelZ03,Cam80}. For the DDAE~\eqref{eq:DDAE} we therefore introduce for $i\in\mosIndexSet \vcentcolon= \mosIndexSetEx$
\begin{equation}
	\label{eq:restriction}
	\begin{aligned}
	\mos{\state}{i} \colon [0,\delay]&\to\R^{\stateDim},\qquad & t &\mapsto \state(t+(i-1)\delay),\\
	\mos{F}{i} \colon [0,\delay] \times \domain{\state} \times \domain{\dot{\state}} \times \domain{\shift{\delay}\state}  &\to\R^{\stateDim}, & (t,\state,y,z) &\mapsto F(t+(i-1)\delay,\state,y,x),\\
	\mos{\state}{0} \colon [0,\delay]&\to\R^{\stateDim}, & t &\mapsto \history(t-\delay).
	\end{aligned}
\end{equation}
Then we have to solve for each $i\in\mosIndexSetEx$ the DAE 
\begin{subequations}
	\label{eq:sequenceDAEs}
	\begin{align}
		\label{eq:methodOfSteps}	
		0 &= \mos{F}{i}(t,\mos{\state}{i},\dmos{\state}{i},\mos{\state}{i-1}), & t\in[0,\delay),\\
		\label{eq:initialConditionDAE}\mos{\state}{i}(0) &= \mos{\state}{i-1}(\delay^-),
	\end{align}
\end{subequations}
with right continuation
\begin{equation}
	\label{eq:rightContinuation}
	\mos{\state}{i-1}(\delay^-) \vcentcolon= \lim_{t\nearrow \delay} \mos{\state}{i-1}(t).
\end{equation}
If \eqref{eq:sequenceDAEs} is uniquely solvable (provided that the initial value $\mos{\state}{i-1}(\delay^-)$ is consistent), then we can construct the solution of \eqref{eq:IVP} on the successive time intervals $[(i-1)\delay,i\delay)$. In general, we cannot expect a smooth transition of the solution between these intervals. This is, for instance, illustrated with several examples in \cite{Ung18}. We therefore use the following solution concept.
\begin{defn}[Solution concept]
	\label{def:solutionConcept}
	Assume that $F$ in the DDAE~\eqref{eq:DDAE} and the history function $\history$ are sufficiently smooth. We call $\state\in\mathcal{C}(\timeInt,\R^{\stateDim})$ a \emph{solution} of \eqref{eq:DDAE:IVP} if for all $i\in\mosIndexSet$ the restriction $\mos{\state}{i}$ of $\state$ as in \eqref{eq:restriction} is a solution of \eqref{eq:sequenceDAEs}. We call the history function $\history\colon[-\delay,0]\to\R^{\stateDim}$ \emph{consistent} if the initial value problem \eqref{eq:DDAE:IVP} has at least one solution.
\end{defn}
We emphasize that in order to check if a history function is consistent, we actually have to compute a solution of the initial value problem~\eqref{eq:DDAE:IVP}. This is in contrast to the DAE theory, where it suffices to compute the consistency set~\eqref{eq:DAE:consistencySet}. To account for this issue, we adopt the following definition from \cite{Ung18}, which ensures that we can at least ensure a solution in the interval $[0,\tau)$.

\begin{defn}[Admissible history function]
	The history function $\history$ is called \emph{admissible} for the DDAE~\eqref{eq:DDAE} if the initial condition
	\begin{displaymath}
		\mos{\state}{1}(0) = \history(0)
	\end{displaymath}
	is consistent for the DAE~\eqref{eq:sequenceDAEs} with $i=1$.
\end{defn}

Following the discussion from \Cref{sec:DAEs}, consistent initial values are characterized by the consistency set~\eqref{eq:DAE:consistencySet}. We therefore have to assume that the DAE
\begin{equation}
	0 = \mos{F}{1}(t,\mos{\state}{1},\dmos{\state}{1},\history(t-\delay))
\end{equation}
satisfies~\Cref{hyp:1}. In order to simplify the discussion, we make the following definition.

\begin{defn}
	\label{def:associatedDAE}
	The DAE that is obtained from the DDAE~\eqref{eq:DDAE} by substituting $\state(t-\delay)$ with a control function $u(t)$ is called the \emph{associated DAE} for the DDAE~\eqref{eq:DDAE}. We say that the DDAE~\eqref{eq:DDAE} satisfies \Cref{hyp:2} if its associated DAE satisfies \Cref{hyp:2}.
\end{defn}

Suppose now that the DDAE~\eqref{eq:DDAE} satisfies \Cref{hyp:2} with strangeness index $\mu$. Following the discussion in \Cref{sec:DAEs} we observe that the algebraic equation (which now takes the form of a difference equation) is given by
\begin{equation}
	\label{eq:differenceEquation}
	0 = A\left(t,\state(t),\state(t-\delay),\dot{\state}(t-\delay),\ldots,\state^{(\mu)}(t-\delay\right).
\end{equation}
Since the set of consistent initial values is described by~\eqref{eq:differenceEquation}, we immediately obtain the following result.
\begin{lemma}
	\label{lem:admissibleHistory}
	Assume that the history function $\history$ is sufficiently smooth and the DDAE~\eqref{eq:DDAE} satisfies \Cref{hyp:2} with strangeness index $\mu$. Then~$\history$ is admissible for the DDAE~\eqref{eq:DDAE} if and only if
	\begin{equation}
		\label{eq:admissibleHistory}
		0 = A\left(t,\history(0),\history(-\delay),\dot{\history}(-\delay),\ldots,\history^{(\mu)}(-\delay)\right).
	\end{equation}
\end{lemma}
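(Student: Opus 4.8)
The plan is to unwind the definition of admissibility together with the DAE-consistency theory from \Cref{sec:DAEs}, the key observation being that on the first interval of the method of steps the relevant DAE is nothing but the associated DAE driven by a particular control.

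First I would note that, by the definition of an admissible history function, $\history$ is admissible if and only if the initial value $\mos{\state}{1}(0) = \history(0)$ is consistent for the DAE~\eqref{eq:sequenceDAEs} with $i=1$. Since $(i-1)\delay = 0$ for $i=1$, the time-shifted right-hand side reduces to $\mos{F}{1} = F$ and the history segment is $\mos{\state}{0}(t) = \history(t-\delay)$. Hence this DAE reads $0 = F(t, \mos{\state}{1}(t), \dmos{\state}{1}(t), \history(t-\delay))$, which is precisely the associated DAE of \Cref{def:associatedDAE} with the control fixed as $u(t) = \history(t-\delay)$.

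Next I would invoke the consistency characterization of \Cref{sec:DAEs}. By \Cref{def:associatedDAE} the assumption that the DDAE satisfies \Cref{hyp:2} means that its associated DAE does, and by construction the characteristic values $\mu, a, d$ and the rank conditions of \Cref{hyp:2} are independent of the (sufficiently smooth) control; in particular they apply to the smooth choice $u = \history(\cdot-\delay)$. The strangeness-free reformulation~\eqref{eq:sFree:control} then shows that the set of consistent initial values is cut out by the algebraic part~\eqref{eq:sFreeAlgebraic:control}, so that $\state_0 = \history(0)$ is consistent at the initial time $t_0 = 0$ if and only if $0 = A(0, \history(0), u(0), \dot{u}(0), \ldots, u^{(\mu)}(0))$.

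It then remains to rewrite the control data in terms of the history. Differentiating $u(t) = \history(t-\delay)$ gives $u^{(k)}(t) = \history^{(k)}(t-\delay)$, and evaluating at $t=0$ yields $u^{(k)}(0) = \history^{(k)}(-\delay)$ for $k = 0,\ldots,\mu$. Substituting these identities into the consistency condition produces exactly~\eqref{eq:admissibleHistory}, which closes the equivalence. I do not anticipate a genuinely hard step, as the argument is essentially bookkeeping with definitions; the one point that warrants care is the claim that the associated DAE retains its characteristic values under the specific control $u = \history(\cdot-\delay)$. This is guaranteed by the uniform (in $u$) rank hypotheses of \Cref{hyp:2}, together with the fact recalled in \Cref{sec:DAEs} that the product $\Za^T\calD_\mu$ defining $A$ depends only on $t$, $\state$, and the control data, and not on derivatives of $\state$, so that the resulting condition involves $\history(0)$ and the history derivatives at $-\delay$ rather than derivatives of the unknown solution at $0$.
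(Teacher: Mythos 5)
Your proposal is correct and follows exactly the route the paper intends: the paper gives no separate proof, stating that the result follows "immediately" from the preceding observation that the algebraic part of the associated DAE becomes the difference equation~\eqref{eq:differenceEquation} and that consistency of initial values is characterized by the algebraic part~\eqref{eq:sFreeAlgebraic:control}. Your write-up simply fills in that bookkeeping (identifying the $i=1$ step DAE with the associated DAE under the control $u(t)=\history(t-\delay)$ and evaluating at $t_0=0$), which is precisely the argument the paper relies on.
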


\Cref{lem:admissibleHistory} requires that the DDAE satisfies \Cref{hyp:2}, which in turn implies that the associated DAE is regular. Unfortunately, this is only a sufficient condition for the existence of a unique solution for the initial trajectory problem \eqref{eq:DDAE:IVP}, see for instance \cite{TreU19,HaM16}. It is easy to see that the associated DAE for the hybrid numerical-experimental model~\eqref{eq:hybridSystem} is not regular and therefore does not satisfy \Cref{hyp:2} and hence \Cref{lem:admissibleHistory} does not apply to \eqref{eq:hybridSystem}.

One strategy to resolve this issue is to find a reformulation of the DDAE~\eqref{eq:DDAE} by shifting certain equations. This is achieved either by a combined shift-and-derivative array and the so-called \emph{shift index} \cite{HaM16,Ha15}, or by some kind of compress-and-shift algorithm \cite{Cam95,HaM12,HaMS14,TreU19}. The idea of the latter algorithm is to identify (after a potential transformation of the equations -- the compression step) which equations do not depend on the current state but solely on the past state. These equations are then shifted in time and the procedure is iterated. The special structure of the hybrid numerical-experimental model thus immediately suggest to shift the second block row of equations, yielding
\begin{equation}
	\label{eq:hybridSystem:shifted}
	0 = F(t,\state,\dot{\state},\shift{\delay}{\state}) \vcentcolon= \begin{bmatrix}
		\check{F}(t,\state_1,\dot{\state}_1,\shift{\delay}{\hat{G}(t,\state_2))}\\
		\hat{F}(t,\state_2,\dot{\state}_2,\check{G}(t,\state_1))
	\end{bmatrix},
\end{equation}
with $\state(t) \vcentcolon= [\state_1(t);\state_2(t)]\in\R^{\stateDim}$, $\stateDim\vcentcolon= \stateDim_1 + \stateDim_2$ and shift operator
\begin{displaymath}
	\shift{\delay}{f}(t) \vcentcolon= f(t-\delay).
\end{displaymath}
In the linear case \eqref{eq:hybridSystem:shifted} simplifies to
\begin{equation}
	\label{eq:hybridSystem:linear:shifted}
	\begin{bmatrix}
		\check{E} & \\
		& \hat{E}
	\end{bmatrix}\begin{bmatrix}
		\dot{\state}_1\\
		\dot{\state}_2
	\end{bmatrix} = \begin{bmatrix}
		\check{A} & 0\\
		\hat{B}\check{C} & \hat{A}
	\end{bmatrix}\begin{bmatrix}
		\state_1\\
		\state_2
	\end{bmatrix} + \begin{bmatrix}
		0 & \check{B}\hat{C}\\
		0 & 0
	\end{bmatrix}\begin{bmatrix}
		\shift{\delay}{\state_1}\\
		\shift{\delay}{\state_2}
	\end{bmatrix} + 
	\begin{bmatrix}
		\check{f}\\
		\hat{f}
	\end{bmatrix}.
\end{equation}
We immediately obtain
\begin{displaymath}
	\det\left(\begin{bmatrix}
		s\check{E}-\check{A} & 0\\
		-\hat{B}\check{C} & s\hat{E}-\hat{A}
	\end{bmatrix}\right) = \det(s\check{E}-\check{A})\det(s\hat{E}-\hat{A})
\end{displaymath}
and thus have proven the next result.
\begin{lemma}
	\label{lem:hybrid:linear:shifted:regular}
	The matrix pencil of the associated DAE for the linear shifted hybrid numerical-experimental system~\eqref{eq:hybridSystem:linear:shifted} is regular if and only if the associated DAEs of the linear subsystems~\eqref{eq:DAE:linearSubsystems} are regular.
\end{lemma}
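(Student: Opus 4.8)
The plan is to read off the matrix pencil of the associated DAE and exploit its block-triangular shape. Following \Cref{def:associatedDAE}, I would first form the associated DAE for~\eqref{eq:hybridSystem:linear:shifted} by replacing the delayed states $\shift{\delay}{\state_1},\shift{\delay}{\state_2}$ with a control $u$. The key observation is that the only off-diagonal coupling block $\check{B}\hat{C}$ sits in the matrix multiplying the delayed state, so upon forming the associated DAE it is absorbed into the control (and hence into the inhomogeneity) and does not enter the pencil at all. What remains is
\begin{displaymath}
  (E,A) = \left(\begin{bmatrix}\check{E} & 0\\ 0 & \hat{E}\end{bmatrix},\ \begin{bmatrix}\check{A} & 0\\ \hat{B}\check{C} & \hat{A}\end{bmatrix}\right),
\end{displaymath}
whose characteristic matrix $sE-A$ is block lower triangular.

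Next I would factor the determinant. Because $sE-A$ is block lower triangular, a Laplace expansion (equivalently, a Schur-complement argument) gives $\det(sE-A)=\det(s\check{E}-\check{A})\det(s\hat{E}-\hat{A})$, which is precisely the identity displayed above the lemma statement and constitutes the only computational content of the argument. By the definition of regularity, $(E,A)$ is regular if and only if this polynomial is not identically zero. Since $\R[s]$ is an integral domain, a product of two polynomials vanishes identically if and only if one of the factors does. Hence $(E,A)$ is regular exactly when both $(\check{E},\check{A})$ and $(\hat{E},\hat{A})$, the pencils of the associated DAEs of the subsystems~\eqref{eq:DAE:linearSubsystems}, are regular, which establishes both directions of the equivalence simultaneously.

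There is no genuine obstacle in this argument; the one substantive point is conceptual rather than computational. Shifting the second block row of equations is precisely what moves the coupling $\check{B}\hat{C}$ into the delayed term and thereby decouples $(E,A)$ into triangular form. Without the shift the current-time pencil would carry \emph{both} off-diagonal coupling blocks, its determinant would no longer factor, and regularity could fail even for regular subsystems---exactly the behavior exhibited in \Cref{ex:coupledSfreeSystems}. The delicate part of the overall development therefore lies not in this lemma but in justifying that the shift is admissible and in extending the block-triangular decoupling to the nonlinear and higher-index settings.
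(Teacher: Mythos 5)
Your argument is correct and coincides with the paper's own proof: both form the associated DAE for the shifted system, observe that the pencil is block lower triangular so that $\det(sE-A)=\det(s\check{E}-\check{A})\det(s\hat{E}-\hat{A})$, and conclude via the fact that a product of polynomials in $\R[s]$ is nonzero iff both factors are. The additional remarks on why the shift is what produces the triangular structure are accurate but not needed for the lemma itself.
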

\begin{remark}
 	In the terminology of \cite{HaM16}, the hybrid numerical-experimental system~\eqref{eq:hybridSystem:linear} has \emph{shift index} $\kappa=1$. In the literature, shifting of equations, i.e., systems with shift index $\kappa>0$, are often referred to as noncausal and hence not physical. The hybrid numerical-experimental setup details that shifting of equations can also occur if the dynamics of the subsystems affect the overall dynamic at different time instants. 
\end{remark}
Before we proceed let us emphasize that shifting of equations potentially enlarges the solution space of the initial trajectory problem for the differential equation. 
\begin{example}
	\label{ex:shiftingSolutionSpace}
	Consider the DDAE
	\begin{subequations}
	\label{eq:shift}
	\begin{align}
		\label{eq:shift:1} \dot{x}_1(t) &= x_2(t-\delay) + f(t),\\
		\label{eq:shift:2} 0 &= x_2(t-\delay) - g(t).
	\end{align}
	\end{subequations}
	Notice that the second equation constitutes a restriction for the initial trajectory. Indeed, if we prescribe the initial trajectory
	\begin{equation}
		\label{eq:initialTrajectory}
			x_1(t) = \history_1(t), \qquad 
			x_2(t) = \history_2(t),
		\qquad\text{for $t\leq 0$},
	\end{equation}
	then a solution cannot exist if $\history_2(t) \neq g(t+\delay)$ for $t\in[-\delay,0]$. If $\history_2(t) = g(t+\delay)$ for $t\in[-\delay,0]$, then the solution of the initial trajectory problem \eqref{eq:shift},\eqref{eq:initialTrajectory} is given by
	\begin{displaymath}
		x_1(t) = \history_1(0) + \int_0^t g(s) + f(s)\,\mathrm{d}s,\qquad x_2(t) = g(t+\delay)\qquad\text{for $t\geq 0$}.
	\end{displaymath}
	In particular, the solution space for $x_1$ is parameterized by $\history_1(0)$ and thus a one-dimensional vector space. 
	If we however replace \eqref{eq:shift:2} with the shifted equation
	\begin{equation}
		\label{eq:shift:2:modified}
		x_2(t) = g(t+\delay)
	\end{equation}
	and consider the initial trajectory problem \eqref{eq:shift:1},\eqref{eq:shift:2:modified},\eqref{eq:initialTrajectory}, then for any history function $\history$ that satisfies $\history_2(0) = g(\delay)$ the solution of \eqref{eq:shift:1},\eqref{eq:shift:2:modified},\eqref{eq:initialTrajectory} for $t\in[0,\delay]$ is given by
	\begin{displaymath}
		x_1(t) = \history_1(0) + \int_0^t \history_2(s-\tau) + f(s)\,\mathrm{d}s,\qquad x_2(t) = g(t+\tau),
	\end{displaymath}
	such that the solution space for $x_1$ is infinite-dimensional.
\end{example}
\begin{remark}
	The shifted hybrid system~\eqref{eq:hybridSystem:shifted} showcases, that only an initial trajectory for the experimental system $\hat{F}$ is required, which is in agreement with the discussion after \eqref{eq:hybridSystem:linear}. This is no contradiction to \Cref{ex:shiftingSolutionSpace} since the numerical and experimental part are initialized at different time points. 
\end{remark}

If the linear subsystems are regular, then \Cref{lem:hybrid:linear:shifted:regular} together with \cite[Thm.~4]{TreU19} immediately implies existence and uniqueness of solutions of the initial trajectory problem for the DDAE~\eqref{eq:hybridSystem:linear:shifted} in the space of piecewise-smooth distributions \cite{Tre09-thesis}. Unfortunately, it is not immediately clear, what the index of the matrix pencil of the associated DAE is.
\begin{example}
	\label{ex:index:shiftedHybridSys}
	Consider the matrix pencil
	\begin{displaymath}
		\left(\left[\begin{array}{cc|cc}
			0 & 1 & 0 & 0\\
			0 & 0 & 0 & 0\\\hline
			0 & 0 & 0 & 1\\
			0 & 0 & 0 & 0
		\end{array}\right], \left[\begin{array}{cc|cc}
			1 & 0 & 0 & 0\\
			0 & 1 & 0 & 0\\\hline
			a & b & 1 & 0\\
			c & d & 0 & 1
		\end{array}\right]\right) \sim \left(\left[\begin{array}{cccc}
			0 & 1 & 0 & 0\\
			0 & 0 & 0 & 0\\
			0 & 0 & 0 & 1\\
			0 & 0 & 0 & 0
		\end{array}\right], \left[\begin{array}{cccc}
			1 & 0 & 0 & 0\\
			0 & 1 & 0 & 0\\
			0 & 0 & 1 & 0\\
			c & 0 & 0 & 1
		\end{array}\right]\right)
	\end{displaymath}
	of the hybrid numerical-experimental system~\eqref{eq:hybridSystem:linear:shifted}, where both subsystems have differentiation index $\nu=1$. If $c=0$, then the pencil also has index $\nu=1$, otherwise the index is $\nu=2$.
\end{example}
In particular, the index of the shifted hybrid numerical-experimental model depends on the coupling functions $\check{G}$ and $\hat{G}$. As a direct consequence, \Cref{hyp:2} has to be checked for each example separately, since it is not clear a-priori, what the corresponding strangeness index $\mu$ is. A notable exception is provided in the case that both subsystems are strangeness-free.
\begin{theorem}
	\label{thm:regularStrangenessFree}
	Suppose that the subsystems \eqref{eq:DAE:numerical} and \eqref{eq:DAE:experimental} are strangeness-free, i.e., satisfy \Cref{hyp:2} with characteristic values $\check{\mu}=\hat{\mu} = 0$, $\check{a},\hat{a}, \check{d}$, and $\hat{d}$, respectively. If~$\delay>0$, then the shifted hybrid numerical-experimental model \eqref{eq:hybridSystem:shifted} satisfies \Cref{hyp:2} with characteristic values $\mu=0$, $a=\check{a}+\hat{a}$, and $d=\check{d}+\hat{d}$. 
\end{theorem}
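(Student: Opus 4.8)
The plan is to verify \Cref{hyp:2} for the associated DAE of the shifted model \eqref{eq:hybridSystem:shifted} with the claimed characteristic values. Forming the associated DAE according to \Cref{def:associatedDAE} replaces the delayed state $\shift{\delay}{\state}$ by a smooth control $u=[u_1;u_2]$. Because the shift has already moved the second block to the current time, the delayed state survives only in the first block, through the input $\shift{\delay}{\hat{G}(t,\state_2)}=\hat{G}(t-\delay,\state_2(t-\delay))$, which becomes the state-independent control $\hat{G}(t-\delay,u_2)$; the second block is driven by the current-state output $\check{G}(t,\state_1)$. The associated DAE is therefore the cascade
\begin{equation*}
	0 = \begin{bmatrix}
		\check{F}(t,\state_1,\dot{\state}_1,\hat{G}(t-\delay,u_2))\\
		\hat{F}(t,\state_2,\dot{\state}_2,\check{G}(t,\state_1))
	\end{bmatrix},
\end{equation*}
in which $\state_1$ is forced only by the control and $\state_2$ is forced by $\state_1$ via $\check{G}$. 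Here the hypothesis $\delay>0$ is essential: it is precisely what turns the delayed output into an external control. For $\delay=0$ the first block would depend on the current $\state_2$, the cascade would collapse into the genuinely two-way coupled DAE \eqref{eq:completeModel}, and, as \Cref{ex:coupledSfreeSystems} shows, the characteristic values could then change.

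At level $\mu=0$ one has $\calD_0=F$ and $\eta=\dot{\state}$. Since $\check{F}$ is independent of $(\state_2,\dot{\state}_2)$ and $\hat{F}$ depends on $\state_1$ only through $\check{G}(t,\state_1)$ and not on $\dot{\state}_1$, the Jacobian $\tfrac{\partial}{\partial\dot{\state}}F$ is block diagonal with diagonal blocks $\tfrac{\partial}{\partial\dot{\state}_1}\check{F}$, $\tfrac{\partial}{\partial\dot{\state}_2}\hat{F}$, whereas $\tfrac{\partial}{\partial\state}F$ is block lower triangular with diagonal blocks $\tfrac{\partial}{\partial\state_1}\check{F}$, $\tfrac{\partial}{\partial\state_2}\hat{F}$ and off-diagonal block $(\tfrac{\partial}{\partial u_2}\hat{F})(\tfrac{\partial}{\partial\state_1}\check{G})$. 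I would then assemble the matrix functions of \Cref{hyp:2} from the subsystem data, setting $\Za=\operatorname{diag}(\cZa,\hZa)$ and $\Zd=\operatorname{diag}(\cZd,\hZd)$, and seeking $\Ta=\begin{smallbmatrix}\cTa & 0\\ T_{21} & \hTa\end{smallbmatrix}$ with an off-diagonal block $T_{21}$ to be determined. Since a block-triangular matrix inherits full row (respectively column) rank from full-rank diagonal blocks, the block shapes immediately give $\Za^T\tfrac{\partial}{\partial\dot{\state}}F=0$ with $\rank(\tfrac{\partial}{\partial\dot{\state}}F)=\check{d}+\hat{d}=d$, settling condition~(i), as well as the rank identity $\rank(\Za^T\tfrac{\partial}{\partial\state}F)=\check{a}+\hat{a}=a$ of condition~(ii). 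All constant-rank statements hold on a neighborhood inherited from the subsystems, since the subsystem hypotheses hold for every smooth input and, in particular, at the value $\check{G}(t,\state_1)$ attained on the coupled manifold.

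The main obstacle is the kernel requirement of condition~(ii), i.e.\ choosing $T_{21}$ so that $\Za^T(\tfrac{\partial}{\partial\state}F)\Ta=0$. Multiplying out, the diagonal blocks vanish by condition~(ii) for the two subsystems, and the only surviving entry is the $(2,1)$ block
\begin{equation*}
	\hZa^T\!\left(\tfrac{\partial}{\partial u_2}\hat{F}\right)\!\left(\tfrac{\partial}{\partial\state_1}\check{G}\right)\!\cTa+\hZa^T\!\left(\tfrac{\partial}{\partial\state_2}\hat{F}\right)\!T_{21},
\end{equation*}
which I would annihilate by solving for $T_{21}$. This is possible because $\hZa^T\tfrac{\partial}{\partial\state_2}\hat{F}$ has full row rank $\hat{a}$ by condition~(ii) for $\hat{F}$ and is thus pointwise surjective; a smooth solution exists by composing with its Moore--Penrose pseudoinverse, which is smooth on the constant-rank neighborhood and leaves the pointwise maximal rank $\rank(\Ta)=d$ untouched. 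Condition~(iii) then follows from the same triangular bookkeeping: $\tfrac{\partial}{\partial\dot{\state}}F\,\Ta$ is block lower triangular with full-column-rank diagonal blocks $\tfrac{\partial}{\partial\dot{\state}_1}\check{F}\,\cTa$ and $\tfrac{\partial}{\partial\dot{\state}_2}\hat{F}\,\hTa$, hence has rank $d$, while $\Zd^T\tfrac{\partial}{\partial\dot{\state}}F\,\Ta$ is square block lower triangular with nonsingular diagonal blocks $\cZd^T\tfrac{\partial}{\partial\dot{\state}_1}\check{F}\,\cTa$ and $\hZd^T\tfrac{\partial}{\partial\dot{\state}_2}\hat{F}\,\hTa$, hence nonsingular. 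Nonemptiness of $\calM_0$ follows from the cascade, by picking first a consistent point of the first subsystem and then, with the value $\check{G}(t,\state_1)$, one of the second. This establishes \Cref{hyp:2} with $\mu=0$, $a=\check{a}+\hat{a}$, $d=\check{d}+\hat{d}$.
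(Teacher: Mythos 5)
Your proposal is correct and follows essentially the same route as the paper: block-diagonal $\Za$ and $\Zd$, a block lower triangular $\Ta$ whose off-diagonal block is solved for using the surjectivity of $\hZa^T\tfrac{\partial\hat{F}}{\partial\state_2}$, and the same triangular rank bookkeeping for conditions (i)--(iii). The only difference is cosmetic: you realize the right inverse via the Moore--Penrose pseudoinverse, whereas the paper uses the explicit complement $\hTa^\star\bigl(\hZa^T\tfrac{\partial\hat{F}}{\partial\state_2}\hTa^\star\bigr)^{-1}$.
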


\begin{proof}
	Let $\cZa$, $\cTa$, $\cZd$, and $\hZa$, $\hTa$, $\hZd$ denote the matrix functions from \Cref{hyp:2} for the subsystems \eqref{eq:DAE:numerical} and \eqref{eq:DAE:experimental}, respectively. Define $a \vcentcolon= \check{a} + \hat{a}$ and accordingly, $d = n-a = n_1 - \check{a} + n_2 - \hat{a} = \check{d} + \hat{d}$. Choose $\hTa^\star$ such that $\begin{bmatrix}
			\hTa & \hTa^\star
		\end{bmatrix}$ is nonsingular. From \Cref{hyp:2} we deduce that
	\begin{displaymath}
		\left(\hZa^T \frac{\partial \hat{F}}{\partial \state_2}\hTa^\star\right)(t,\state_2,\dot{\state}_2,\hat{G}(t,\state_1))
	\end{displaymath}
	is nonsingular.  Define (omitting arguments) the matrix functions 
	\begin{align*}
		\Za &\vcentcolon= \begin{bmatrix}
		\cZa & 0\\
		0 & \hZa\end{bmatrix}, & 
		\Ta &\vcentcolon= \begin{bmatrix}
			\cTa & 0\\
			-\hTa^\star\left(\hZa^T \frac{\partial \hat{F}}{\partial \state_2}\hTa^\star \right)^{-1} \cZa^T\frac{\partial \hat{F}}{\partial u_2}\frac{\partial \check{G}}{\partial \state_1}\hTa & \hTa
		\end{bmatrix}, &
		\Zd &\vcentcolon= \begin{bmatrix}
			\cZd & 0\\
			0 & \hZd
		\end{bmatrix}.
	\end{align*}	
	We now have to check the different items from \Cref{hyp:2} for the shifted hybrid numerical-experimental model~\eqref{eq:hybridSystem:shifted}. We notice that $\check{\mu} = 0 = \hat{\mu}$ implies $\check{\calD}_\mu = \check{F}$ and $\hat{\calD}_\mu = \hat{F}$ and observe 
	\begin{align*}
		\rank\left(\frac{\partial F}{\partial \dot{\state}} \right) = \rank\left(\begin{bmatrix}\
			\frac{\partial \check{F}}{\partial \state_1}  & 0\\
			0 & \frac{\partial \hat{F}}{\partial \state_2}
		\end{bmatrix}\right) = \rank\left(\frac{\partial \check{F}}{\partial \state_1}\right) + \rank\left(\frac{\partial \hat{F}}{\partial \state_2}\right) = \check{a} + \hat{a} = a.
	\end{align*}
	We immediately conclude 
	\begin{multline*}
		\left(\Za^T\frac{\partial F}{\partial \dot{\state}}\right) (t,\state,\dot{\state},\shift{\delay}{\state}) =\\
		\begin{bmatrix}
			\left(\cZa^T\frac{\partial \check{F}}{\partial \dot{\state}_1}\right)\left(t,\state_1,\dot{\state}_1,\shift{\delay}{\hat{G}(t,\state_2)}\right) & 0\\
			0 &  \left(\hZa^T\frac{\partial \hat{F}}{\partial \dot{\state}_2}\right)\left(t,\state_2,\dot{\state}_2,\check{G}(t,\state_1)\right)
		\end{bmatrix} = 0
	\end{multline*}
	such that the first item from \Cref{hyp:2} is satisfied. For the second item we obtain (omitting arguments)
	\begin{align*}
		\hat{a} = \rank\left(\hZa^T \frac{\partial \hat{F}}{\partial \state_2}\right) \leq \rank\left(\begin{bmatrix}
			\cZa^T \frac{\partial\hat{F}}{\partial u_2}\frac{\partial \check{G}}{\partial \state_1} & \hZa^T \frac{\partial \hat{F}}{\partial \state_2}
		\end{bmatrix}\right) \leq \hat{a}
	\end{align*}
	and thus
	\begin{align*}
		\rank \left(\Za^T \frac{\partial F}{\partial \state} \right) &= \rank\left(\begin{bmatrix}
			\cZa^T \frac{\partial \check{F}}{\partial \state_1}& 0\\
			\cZa^T \frac{\partial\hat{F}}{\partial u_2}\frac{\partial \check{G}}{\partial \state_1} & \hZa^T \frac{\partial \hat{F}}{\partial \state_2}
		\end{bmatrix}\right)\\
		&= \rank\left(\cZa^T \frac{\partial \check{F}}{\partial \state_1}\right) + \rank\left(\hZa^T \frac{\partial \hat{F}}{\partial \state_2}\right)
		= \check{a} + \hat{a} = a.
	\end{align*}
	We conclude
	\begin{displaymath}
		\Za^T\frac{\partial F}{\partial z}\Ta = \begin{bmatrix}
			\cZa^T \frac{\partial \check{F}}{\partial z_1}\cTa & 0\\
			\cZa^T \frac{\partial \hat{F}}{\partial u_2}\frac{\partial \check{G}}{\partial \state_1}\hTa - \hZa^T \frac{\partial \hat{F}}{\partial z_2}\hTa^\star\left(\hZa^T \frac{\partial \hat{F}}{\partial \state_2}\hTa^\star \right)^{-1} \cZa^T\frac{\partial \hat{F}}{\partial u_2}\frac{\partial \check{G}}{\partial \state_1}\hTa & \hZa^T \frac{\partial \hat{F}}{\partial z_2}\hTa
		\end{bmatrix} = 0.
	\end{displaymath}
	Similarly as before we have
	\begin{displaymath}
		\rank\left(\frac{\partial F}{\partial \dot{\state}}\Ta\right) = \rank\left(\frac{\partial \check{F}}{\partial \dot{\state}_1}\cTa\right) + \rank\left(\frac{\partial \hat{F}}{\partial \dot{\state}_2}\hTa\right) = \check{d} + \hat{d} = d.
	\end{displaymath}
	The proof follows from
	\begin{displaymath}
		\rank\left(\Zd^T\frac{\partial F}{\partial \dot{\state}}\Ta\right) = \rank\left(\cZd^T \frac{\partial \check{F}}{\partial \dot{\state}_1} \cTa\right) + \rank\left(\hZd^T \frac{\partial \hat{F}}{\partial \dot{\state}_2} \hTa\right) = d.\qedhere
	\end{displaymath}
\end{proof}

\begin{remark}
	The assumption $\delay>0$ is crucial in \Cref{thm:regularStrangenessFree}. In the case $\delay=0$, we have already seen in \Cref{ex:coupledSfreeSystems} that even if both subsystems are strangeness-free, the coupled system might have strangeness-index $\mu>0$.
\end{remark}

In the case that either of the subsystems is not strangeness-free we can proceed as follows. Let 
\begin{align*}
	0 &= \check{D}(t,\state_1,\dot{\state}_1,u_1), & 
	0 &= \hat{D}(t,\state_2,\dot{\state}_2,u_2), &\\
	0 &= \check{A}\left(t,\state_1,u_1,\dot{u}_1,\ldots,u_1^{(\check{\mu})}\right), &
	0 &= \hat{A}\left(t,\state_2,u_2,\dot{u}_2,\ldots,u_2^{(\check{\mu})}\right),\\
	\dot{\state}_1 &= \check{\mathfrak{f}}\left(t,\state_1,u_1,\dot{u}_1,\ldots,u^{(\check{\mu}+1)}\right), &
	\dot{\state}_2 &= \hat{\mathfrak{f}}\left(t,\state_2,u_2,\dot{u}_2,\ldots,u^{(\hat{\mu}+2)}\right)
\end{align*}
denote the strangeness-free reformulations and the underlying ODEs for \eqref{eq:DAE:numerical} and \eqref{eq:DAE:numerical}, respectively. Recall the coupling conditions
\begin{displaymath}
	u_1 = \shift{\delay}{\hat{G}(t,\state_2)}\qquad\text{and}\qquad u_2 = \check{G}(t,\state_1),
\end{displaymath}
which we have to differentiate $\hat{\mu}+1$, respectively $\check{\mu}+1$ times. We observe that in the interval $[0,\delay)$ the coupling condition for $u_1$ does not depend on $\state_2$ but on the history $\history_2$. In particular, we obtain (assuming that $\hat{G}$ is sufficiently smooth)
\begin{align*}
	\dot{u}_1 &= \shift{\delay}{\frac{\partial \hat{G}}{\partial t}(t,\history_2) + \frac{\partial \hat{G}}{\partial \state_2}(t,\history_2)\dot{\history}_2},\\
	\ddot{u}_1 &= \shift{\delay}{\frac{\partial^2 \hat{G}}{\partial t^2}(t,\history) + 2\frac{\partial^2\hat{G}}{\partial t \partial \state_2}(t,\history_2)\dot{\history}_2 + \frac{\partial^2 \hat{G}}{\partial \state_2^2}(t,\history_2)\dot{\history}_2 + \frac{\partial \hat{G}}{\partial\state_2}(t,\history_2)\ddot{\history}_2},
\end{align*}
and similarly for higher derivatives. In particular, there exists functions $\dcheck{D}$, $\dcheck{A}$, and $\dcheck{\mathfrak{f}}$ such that
\begin{align*}
	0 &= \dcheck{D}(t,\state_1,\dot{\state}_1,\shift{\delay}{\history_2}),\\
	0 &= \dcheck{A}\left(t,\state_1,\shift{\delay}{\history_2},\shift{\delay}{\dot{\history}_2},\ldots,\shift{\delay}{\history_2^{(\check{\mu})}}\right),\\
	\dot{\state}_1 &= \dcheck{\mathfrak{f}}\left(t,\state_1,\shift{\delay}{\history_2},\shift{\delay}{\dot{\history}_2},\ldots,\shift{\delay}{\history_2^{(\check{\mu}+1)}}\right)
\end{align*}
for $t\in[0,\delay)$. As an immediate consequence, we can (locally) solve for $\state_1$, provided that the initial trajectory $\history_2$ is sufficiently smooth and the consistency condition
\begin{displaymath}
	0 = \dcheck{A}\left(0,\state_1(0),\history_2(-\delay),\dot{\history}_2(-\delay),\ldots,\history_2^{(\check{\mu})}(-\delay)\right)
\end{displaymath}
is satisfied. On the other hand, the input relation for $u_2$ implies
\begin{align*}
	\dot{u}_2 &= \frac{\partial \hat{G}}{\partial t}(t,\state_1) + \frac{\partial \hat{G}}{\partial \state_1}(t,\state_1)\dot{\state}_1\\
	&= \frac{\partial \hat{G}}{\partial t}(t,\state_1) + \frac{\partial \hat{G}}{\partial \state_1}(t,\state_1)\dcheck{\mathfrak{f}}\left(t,\state_1,\shift{\delay}{\history_2},\shift{\delay}{\dot{\history}_2},\ldots,\shift{\delay}{\history_2^{(\check{\mu}+1)}}\right).
\end{align*}
Note that although derivatives of $\history_2$ up to order $\check{\mu}+1$ appear, $\dot{u}_2$ not necessarily depends on all of them (see for instance\Cref{ex:index:shiftedHybridSys}). In any case, there exists functions $\dhat{D}$, $\dhat{A}$, and~$\dhat{\mathfrak{f}}$ such that
\begin{align*}
	0 &= \dhat{D}(t,\state_2,\dot{\state}_2,\state_1),\\
	0 &= \dhat{A}\left(t,\state_2,\state_1,\shift{\delay}{\history_2},\shift{\delay}{\dot{\history}_2},\ldots,\shift{\delay}{\history_2^{(\check{\mu}+\hat{\mu})}}\right),\\
	\dot{\state}_2 &= \dhat{\mathfrak{f}}\left(t,\state_1,\state_2,\shift{\delay}{\history_2},\shift{\delay}{\dot{\history}_2},\ldots,\shift{\delay}{\history_2^{(\check{\mu}+\hat{\mu} + 1)}}\right).
\end{align*}
Thus, the underlying delay differential equation for the shifted hybrid system~\eqref{eq:hybridSystem:shifted} in $[0,\delay)$ is given by
\begin{equation}
	\label{eq:hybridSystem:shifted:uODE}
	\begin{bmatrix}
		\dot{\state}_1\\
		\dot{\state}_2
	\end{bmatrix} = \begin{bmatrix}
		\dcheck{\mathfrak{f}}\left(t,\state_1,\shift{\delay}{\history_2},\shift{\delay}{\dot{\history}_2},\ldots,\shift{\delay}{\history_2^{(\check{\mu}+1)}}\right)\\
		 \dhat{\mathfrak{f}}\left(t,\state_1,\state_2,\shift{\delay}{\history_2},\shift{\delay}{\dot{\history}_2},\ldots,\shift{\delay}{\history_2^{(\check{\mu}+\hat{\mu} + 1)}}\right)
	\end{bmatrix}
\end{equation}
and the differentiation index is at most $\check{\mu}+\hat{\mu}+1$ and we have shown the following result.

\begin{theorem}
	\label{thm:hybrid:shifted:regular}
	Assume that the subsystems \eqref{eq:DAE:numerical} and \eqref{eq:DAE:experimental} satisfy \Cref{hyp:2} with strangeness index $\check{\mu}$, $\hat{\mu}$, respectively. Then the shifted hybrid numerical-experimental system \eqref{eq:hybridSystem:shifted} has a well-defined differentiation index, which is at most $\check{\mu} + \hat{\mu} + 1$.
\end{theorem}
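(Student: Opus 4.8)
The plan is to exploit the block-triangular structure that the time-shift imposes on \eqref{eq:hybridSystem:shifted} and to resolve the two blocks sequentially on $[0,\delay)$ by the method of steps, treating the associated DAE in the sense of \Cref{def:associatedDAE}. The crucial observation is that in the first block $\check{F}(t,\state_1,\dot{\state}_1,\shift{\delay}{\hat{G}(t,\state_2)})$ the delayed argument evaluates, for $t\in[0,\delay)$, to $\hat{G}(t-\delay,\history_2(t-\delay))$ and is therefore a \emph{known} function of $t$ fixed entirely by the history $\history_2$. Hence the first block decouples into a DAE for $\state_1$ alone, driven by the control $u_1=\shift{\delay}{\hat{G}(t,\state_2)}$, whereas the second block depends on $\state_2$ and on $\state_1$ only through the \emph{current} control $u_2=\check{G}(t,\state_1)$. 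This triangularity is exactly what prevents the index from being governed by the coupling (as it was in \Cref{ex:coupledSfreeSystems} for $\delay=0$) and is the structural heart of the argument.

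First I would invoke \Cref{hyp:2} for the numerical subsystem \eqref{eq:DAE:numerical}. Because its rank conditions are required to hold for \emph{every} sufficiently smooth input, the strangeness index $\check{\mu}$ and the strangeness-free reduction $\dcheck{D},\dcheck{A}$ together with the underlying ODE $\dot{\state}_1=\dcheck{\mathfrak{f}}$ are unaffected by the fact that $u_1$ is now prescribed by the history rather than left free; the derivatives $\dot{u}_1,\ldots,u_1^{(\check{\mu}+1)}$ are expressed by the chain rule in terms of $\history_2$ up to order $\check{\mu}+1$. This determines $\state_1$ and all of its derivatives on $[0,\delay)$, subject only to the consistency condition $0=\dcheck{A}(0,\state_1(0),\ldots)$.

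Next I would substitute the now-known $\state_1$ into the second block and treat $u_2=\check{G}(t,\state_1)$ as a prescribed control for the experimental subsystem \eqref{eq:DAE:experimental}, whose reduction $\dhat{D},\dhat{A}$ and underlying ODE $\dot{\state}_2=\dhat{\mathfrak{f}}$ require $u_2$ up to order $\hat{\mu}+1$. The derivative bookkeeping is the decisive step: each differentiation of $u_2=\check{G}(t,\state_1)$ produces one further derivative of $\state_1$, and substituting that derivative via $\dot{\state}_1=\dcheck{\mathfrak{f}}(t,\state_1,\shift{\delay}{\history_2},\ldots,\shift{\delay}{\history_2^{(\check{\mu}+1)}})$ raises the order of $\history_2$ by exactly one. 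Iterating, $u_2^{(\hat{\mu}+1)}$ depends on $\history_2$ up to order $\check{\mu}+\hat{\mu}+1$, which is precisely the order appearing in the assembled underlying delay differential equation \eqref{eq:hybridSystem:shifted:uODE}. Since this explicit ODE for $(\state_1,\state_2)$ exists, the associated DAE has a well-defined differentiation index, and the highest control-derivative order $\check{\mu}+\hat{\mu}+1$ entering the reduction bounds it; the strangeness-free case $\check{\mu}=\hat{\mu}=0$ recovers the bound $1$ consistent with \Cref{thm:regularStrangenessFree}.

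The main obstacle I anticipate is twofold and lies entirely in justifying termination and in the derivative count. I must confirm that the sequential reduction genuinely closes into an ODE with no hidden algebraic constraint surviving at the end; this is where the block-triangularity and the regularity (well-defined strangeness index) of each subsystem are indispensable, since they guarantee that each block can be reduced independently once its control is fixed. The delicate point is to verify that the propagation through the composition $u_2=\check{G}(t,\state_1)\mapsto\dot{\state}_1=\dcheck{\mathfrak{f}}\mapsto\history_2$ adds at most one order per differentiation, so that the bound $\check{\mu}+\hat{\mu}+1$ is never exceeded; I would also stress that this bound need not be sharp, because $\dhat{\mathfrak{f}}$ may fail to depend on the top-order history derivatives, as already witnessed in \Cref{ex:index:shiftedHybridSys}.
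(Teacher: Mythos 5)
Your proposal is correct and follows essentially the same route as the paper: restrict to $[0,\delay)$ so that $u_1=\shift{\delay}{\hat{G}(t,\state_2)}$ becomes a known function of the history $\history_2$, reduce the first subsystem via its strangeness-free form and underlying ODE $\dot{\state}_1=\dcheck{\mathfrak{f}}$, then feed $u_2=\check{G}(t,\state_1)$ into the second subsystem and count that each of the $\hat{\mu}+1$ differentiations of $u_2$ costs one further derivative of $\history_2$ through $\dcheck{\mathfrak{f}}$, yielding the underlying DDE \eqref{eq:hybridSystem:shifted:uODE} and the bound $\check{\mu}+\hat{\mu}+1$. Your remarks on non-sharpness and on the role of the block-triangular structure match the paper's discussion around \Cref{ex:index:shiftedHybridSys}.
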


\begin{example}
	\label{ex:PMSD:hybrid:shifted}
	Shifting the equations for the pendulum in \eqref{eq:PMSD:hybrid} and introducing new variables for the velocities ($v_1 \vcentcolon= \dot{y}_1$, $v_2 \vcentcolon=\dot{x}_2$, and $v_3\vcentcolon=\dot{y}_2$), yields the system
	\begin{subequations}
		\label{eq:PMSD:hybrid:shifted}
		\begin{align}
			\label{eq:PMSD:hybrid:shifted:1}\dot{y}_1 &= v_1,\\
			\label{eq:PMSD:hybrid:shifted:2}\dot{x}_2 &= v_2,\\
			\label{eq:PMSD:hybrid:shifted:3}\dot{y}_2 &= v_3,\\
			\label{eq:PMSD:hybrid:shifted:4}M\dot{v}_1 + Cv_1 + Ky_1 &= f(\shift{\delay}{y_1},\shift{\delay}{y_2},\shift{\delay}{\lambda}),\\
			\label{eq:PMSD:hybrid:shifted:5}m\dot{v}_2 &= -2\lambda x_2,\\
			\label{eq:PMSD:hybrid:shifted:6}m\dot{v}_3 &= -2\lambda(y_2-y_1) - m\mathsf{g},\\
			\label{eq:PMSD:hybrid:shifted:7}0 &= x_2^2 + (y_2-y_1)^2 - L^2,
		\end{align}
	\end{subequations}	
	which is a multibody system with forcing term $f(y_1,y_2,\lambda) = -2\lambda(y_2-y_1)-m\mathsf{g}$ that solely depends on delayed variables. Since multibody systems are special instances of Hessenberg systems, we conclude from \cite[Sec.~4.2]{KunM06} that the shifted hybrid pendulum-mass-spring-damper system has strangeness index $\mu=2$ and satisfies \Cref{hyp:2} with $a = 3$ and $d=4$. The algebraic equations (more precisely the difference equations) are given by
	\begin{align*}
			0 &= x_2^2 + (y_2-y_1)^2 - L^2,\\
			0 &= 2x_2v_2 + 2(y_2-y_1)(v_3-v_1),\\
			0 &= 2v_2^2 + 2(v_2-v_1)^2 - \frac{4}{m}\lambda(x_2^2 + (y_2-y_1)^2)\\
			&\phantom{=} - 2(y_2-y_1)\left(g + \frac{f(\shift{\delay}{y_1},\shift{\delay}{y_2},\shift{\delay}{\lambda})}{M} - \frac{C}{M}v_1 - \frac{K}{M}y_1\right).
	\end{align*}	
	Let us emphasize that despite the higher index, the algebraic equations do not depend on derivatives of $\shift{\delay}{\state}$. Note that also the Lagrange-multiplier is shifted in \eqref{eq:PMSD:hybrid:shifted:4} such that this example is not included in the specific retarded Hessenberg forms as studied in \cite{AscP95}. 
\end{example}

\begin{remark}
	Models with a similar structure as in \eqref{eq:hybridSystem:shifted} and \eqref{eq:hybridSystem:linear:shifted} arise also in the time-discretization via waveform relazation \cite{Ebe08,Mie89,BY11} or the analysis of semi-explicit time-integrators \cite{AltMU19}. 
\end{remark}

\section{Solvability of the hybrid model}
In the previous section we have established that the shifted hybrid numerical-experimental system~\eqref{eq:hybridSystem:shifted} can be solved in the interval $[0,\delay)$ and is regular in the sense of \Cref{thm:wellPosednessDAE}, provided that the subsystems satisfy \Cref{hyp:2} and the history function is admissible. The question that remains to be answered, is whether a solution exists on time intervals $[0,T)$ with $T>\delay$. 

\begin{remark}
	For the linear time-invariant case, this is discussed in detail in \cite{TreU19} for a distributional solution concept and in \cite{Ung18,HaM12,Cam80,Cam91,Cam95,Pop06} for the solution concept as defined in \Cref{def:solutionConcept}. Results for linear time-varying systems are developed in \cite{HaMS14,HaM16}. Moreover, a special class of nonlinear DDAEs is discussed in \cite{AscP95}.
\end{remark}

In view of the method of steps discussed in the previous section, the question that remains to be answered is, which conditions on the subsystems and the history function ensure that the initial condition \eqref{eq:initialConditionDAE} is consistent for all $i=1,\ldots,\maxMOS$. Unfortunately, regularity of the subsystems and an admissible history function are not sufficient to establish a solution for $t>\delay$, see for instance \cite{Ung18}.
\begin{example}
	\label{ex:advancedDDAE}
	Consider the regular DDAE
	\begin{displaymath}
			\dot{x}(t) = y(t), \qquad 0 = x(t) - y(t-1).
	\end{displaymath}
	Applying the method of steps \Cref{eq:sequenceDAEs} yields
	\begin{align}
		\mos{x}{i} = \mos{y}{i-1}\qquad\text{and}\qquad \mos{y}{i} = \dmos{y}{i-1}.
	\end{align}
	With the history function $\history(t) = \begin{smallbmatrix}0\\t+1\end{smallbmatrix}$ we obtain $\mos{x}{1}(t) = t$ and $\mos{y}{1}(t) = 1$, and we deduce that the history function is admissible. However, the initial value $\mos{y}{1}(1) = 1$ is not consistent for the associated DAE on the interval $[1,2)$. In particular, the solution exists only on the interval $[0,1)$.
\end{example}
The issue in the previous example is that the equation $z_i = \dot{z}_{i-1}$ results in solutions $z_i$ that become less smooth for increasing $i$ and possible discontinuities of the form
\begin{displaymath}
	\mos{\state^{(k)}}{i-1}(\delay^-) \neq \mos{\state^{(k)}}{i}(0)
\end{displaymath}
are propagated to $\mos{\state^{(k-1)}}{i}(\delay^-) \neq \mos{\state^{(k-1)}}{i+1}(0)$. The discontinuity propagation leads to the following classification \cite{BelC63}:
The scalar equation
\begin{displaymath}
	a_0\dot{\state}(t) + a_1\dot{\state}(t-\delay) + b_0\state(t) + b_1\state(t-\delay) = f(t)
\end{displaymath}
is of \emph{retarded type} if $a_0\neq0$ and $a_1=0$, of \emph{neutral type} if $a_0\neq0$ and $a_1\neq 0$ and of \emph{advanced type} if $a_0\neq 0$ and $a_1\neq 0$. However, as the previous example indicates, one has to be careful, how this classification can be extended to the vector-valued case (cf.~\cite{Cam91}). Following \cite{HaM16} we make the following definition. 
\begin{defn}
	\label{def:RNA}
	Let 
	\begin{equation}
		\label{eq:uDDE}
		\dot{\state} = \mathfrak{f}\left(t,\state,\shift{\delay}{\state},\ldots,\shift{\delay}{\state^{(s)}}\right)
	\end{equation}
	denote the underlying DDE of the DDAE~\eqref{eq:DDAE} and assume $\tfrac{\partial \mathfrak{f}}{\partial \shift{\delay}{\state^{(s)}}} \not\equiv 0$. Then~\eqref{eq:DDAE} is called \emph{retarded}, \emph{neutral}, or \emph{advanced} if $s=0$, $s=1$, or $s\geq 2$ in~\eqref{eq:uDDE}.
\end{defn}
\begin{lemma}
	\label{lem:notAdvanced}
	Assume that the DDAE~\eqref{eq:DDAE} satisfies \Cref{hyp:2} and let
	\begin{subequations}
		\label{eq:DDAE:sfree}
	\begin{align}
		\label{eq:DDAE:differentialPart}
		0 &= D\left(t,\state,\dot{\state},\shift{\delay}{\state}\right),\\
		\label{eq:DDAE:algebraicPart}		
		0 &= A\left(t,\state,\shift{\delay}{\state},\shift{\delay}{\dot{\state}},\ldots,\shift{\delay}{\state^{(s-1)}}\right)
	\end{align}
	\end{subequations}
	denote the associated strangeness-free reformulation with the convention that either $A$ does not depend on $\shift{\delay}{\state^{(k)}}$ for any $k\in\mathbb{N}$, or
	\begin{displaymath}
		\frac{\partial A}{\partial \shift{\delay}{\state^{(s-1)}}} \not\equiv 0
	\end{displaymath}
	Then~\eqref{eq:DDAE} is retarded, neutral, or advanced, if $\tfrac{\partial A}{\partial \shift{\delay}{\state^{(k)}}} \equiv 0$ for all $k\in\mathbb{N}$, $s=1$, or $s=2$, respectively.
\end{lemma}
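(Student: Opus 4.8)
The plan is to build the underlying DDE~\eqref{eq:uDDE} explicitly out of the strangeness-free data~\eqref{eq:DDAE:sfree} and then simply count the highest delayed derivative that genuinely survives. Since the trichotomy in \Cref{def:RNA} depends only on that order, the three cases fall out once the bookkeeping is done. The core fact to exploit is the ``$+1$'' already visible in \Cref{sec:DAEs}: the underlying ODE~\eqref{eq:uODE:control} of the associated DAE carries one more derivative of the control $u$ than the algebraic part does; substituting $u=\shift{\delay}{\state}$ turns this into one more \emph{delayed} derivative.

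First I would pass to the explicit normal form~\eqref{eq:sFreeExplicit:control} of the associated DAE and re-insert $u=\shift{\delay}{\state}$, so that $u^{(k)}=\shift{\delay}{\state^{(k)}}$. The differential part~\eqref{eq:DDAE:differentialPart}, being of the form $\Zd^T F$, inherits from $F$ a dependence on the past state only at order zero; hence solving it for the differential coordinates gives $\dot{\xi}=\mathcal{L}(t,\xi,\shift{\delay}{\state})$ with no delayed derivatives at all. The algebraic part~\eqref{eq:DDAE:algebraicPart} is solved for the algebraic coordinate $\zeta$ by the implicit function theorem, yielding $\zeta=\mathcal{R}(t,\xi,\shift{\delay}{\state},\ldots,\shift{\delay}{\state^{(s-1)}})$. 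Because the solve rests on invertibility of $\partial A/\partial\zeta$, the relation
\begin{equation*}
	\frac{\partial \mathcal{R}}{\partial \shift{\delay}{\state^{(s-1)}}} = -\left(\frac{\partial A}{\partial \zeta}\right)^{-1}\frac{\partial A}{\partial \shift{\delay}{\state^{(s-1)}}}
\end{equation*}
shows that $\mathcal{R}$ depends genuinely on $\shift{\delay}{\state^{(s-1)}}$ precisely when $A$ does, so the top delayed order is preserved under the solve.

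Next I would assemble the underlying DDE as in~\eqref{eq:uODE:control}. Writing $\state=\mathcal{T}(t,\xi,\zeta)$ one has $\dot{\state}=\partial_t\mathcal{T}+(\partial_\xi\mathcal{T})\dot{\xi}+(\partial_\zeta\mathcal{T})\dot{\zeta}$, and differentiating the algebraic relation gives
\begin{equation*}
	\dot{\zeta} = \partial_t\mathcal{R} + (\partial_\xi\mathcal{R})\mathcal{L} + \sum_{k=0}^{s-1}\frac{\partial \mathcal{R}}{\partial\shift{\delay}{\state^{(k)}}}\,\shift{\delay}{\state^{(k+1)}}.
\end{equation*}
The only place where a delayed derivative of higher order than those already present can enter is the last sum, whose top term is $(\partial\mathcal{R}/\partial\shift{\delay}{\state^{(s-1)}})\,\shift{\delay}{\state^{(s)}}$. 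Hence the underlying DDE takes the form $\dot{\state}=\mathfrak{f}(t,\state,\shift{\delay}{\state},\ldots,\shift{\delay}{\state^{(s)}})$ with top coefficient $(\partial_\zeta\mathcal{T})(\partial\mathcal{R}/\partial\shift{\delay}{\state^{(s-1)}})$. Since $\mathcal{T}$ is a state transformation, $\partial_\zeta\mathcal{T}$ has full column rank, so this coefficient is not identically zero whenever $\partial\mathcal{R}/\partial\shift{\delay}{\state^{(s-1)}}\not\equiv 0$; the top order is therefore not cancelled.

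Finally I would read off the cases. If $A$ depends on no delayed quantity at all, then $\mathcal{R}$ carries no delayed argument, $\dot{\zeta}$ produces no delayed derivative, and the only delayed term in $\mathfrak{f}$ is the order-zero $\shift{\delay}{\state}$ entering through $\mathcal{L}$ (which is genuinely present whenever the problem actually contains a delay); thus $s=0$ in~\eqref{eq:uDDE} and the DDAE is retarded. If instead $\partial A/\partial\shift{\delay}{\state^{(s-1)}}\not\equiv 0$, the preceding step yields genuine dependence on $\shift{\delay}{\state^{(s)}}$ and hence order exactly $s$, so $s=1$ gives neutral and $s=2$ gives advanced, matching \Cref{def:RNA}. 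The main obstacle throughout is the non-cancellation at the top order --- both that solving the algebraic part preserves the highest delayed order and that the surviving term is not annihilated by $\mathcal{T}$ --- which is exactly what the invertibility of $\partial A/\partial\zeta$ and the full column rank of $\partial_\zeta\mathcal{T}$ are there to guarantee.
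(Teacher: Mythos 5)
Your proposal is correct and follows essentially the same route as the paper: the paper's proof is the single sentence that the claim ``follows immediately from rewriting~\eqref{eq:DDAE:sfree} as in~\eqref{eq:sFreeExplicit:control} and~\eqref{eq:uODE:control}'', and your argument is precisely that rewriting carried out in detail. The added bookkeeping --- that the implicit-function solve preserves the top delayed order via invertibility of $\partial A/\partial\zeta$, and that the extra delayed derivative produced by differentiating $\mathcal{R}$ is not annihilated because $\partial_\zeta\mathcal{T}$ has full column rank --- is a faithful and welcome elaboration of what the paper leaves implicit.
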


\begin{proof}
	The proof follows immediately from rewriting~\eqref{eq:DDAE:sfree} as in~\eqref{eq:sFreeExplicit:control} and \eqref{eq:uODE:control}.
\end{proof}

\begin{theorem}
	\label{thm:solvabilityDDAE}
	Suppose that the DDAE~\eqref{eq:DDAE} is sufficiently smooth, has strangeness-index $\mu$, satisfies \Cref{hyp:2} with characteristic values $\mu,a,d$ and $\mu+1,a,d$, is not advanced, and the history function $\history_0\in\mathcal{C}^1([0,\tau],\mathbb{R}^{\stateDim})$ is admissible. Then the initial trajectory problem~\eqref{eq:DDAE:IVP} is solvable.
\end{theorem}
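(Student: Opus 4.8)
The plan is to build the solution piece by piece with the method of steps of \Cref{sec:MOS}, running an induction on $i=1,\dots,\maxMOS$ and invoking the DAE well-posedness result \Cref{thm:wellPosednessDAE} on each per-interval problem \eqref{eq:sequenceDAEs}. At the $i$-th step I must supply two ingredients: that the control $\mos{\state}{i-1}$ feeding the $i$-th DAE is smooth enough for \Cref{hyp:2} to apply, and that the hand-off value $\mos{\state}{i}(0)=\mos{\state}{i-1}(\delay^-)$ is a consistent initial value. For $i=1$ the control is the history $\mos{\state}{0}=\history(\cdot-\delay)\in\calC^1$, and admissibility of $\history$ means, via \Cref{lem:admissibleHistory}, precisely that $\mos{\state}{1}(0)=\history(0)$ is consistent. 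Since \eqref{eq:DDAE} satisfies \Cref{hyp:2} with both sets of characteristic values $\mu,a,d$ and $\mu+1,a,d$, \Cref{thm:wellPosednessDAE} then yields a unique $\mos{\state}{1}\in\calC^1([0,\delay),\R^\stateDim)$.

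First I would confirm that $\calC^1$ regularity of the controls is enough. By \Cref{lem:notAdvanced} the hypothesis that \eqref{eq:DDAE} is not advanced (i.e.\ $s\le 1$ in the underlying DDE \eqref{eq:uDDE}) is equivalent to the algebraic part $A$ in the strangeness-free reformulation \eqref{eq:DDAE:sfree} depending on the delayed state only up to order $s-1\le 0$, that is, only through $\shift{\delay}{\state}$ itself; correspondingly the underlying DDE needs the delayed state at most through its first derivative. Hence a $\calC^1$ control is exactly what is required to evaluate both the differential part $D$ and the algebraic part $A$, so the smoothness ingredient holds at every step as long as each constructed piece is $\calC^1$, which \Cref{thm:wellPosednessDAE} guarantees.

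The inductive step then hinges on the consistency of $\mos{\state}{i}(0)=\mos{\state}{i-1}(\delay^-)$, and I would argue it from the not-advanced structure together with continuity of $\state$. Letting $t\nearrow\delay$ in the $(i-1)$-st algebraic constraint satisfied by $\mos{\state}{i-1}$ gives $0 = A((i-1)\delay,\,\mos{\state}{i-1}(\delay^-),\,\mos{\state}{i-2}(\delay^-))$, where the delayed data enters only at order zero. Consistency for the $i$-th DAE is the requirement $0 = A((i-1)\delay,\,\mos{\state}{i}(0),\,\mos{\state}{i-1}(0))$. The real-time arguments coincide (both junctions sit at real time $(i-1)\delay$); the current-state slots coincide because $\mos{\state}{i}(0)=\mos{\state}{i-1}(\delay^-)$ by construction; and the order-zero delayed slots coincide because continuity of $\state$ forces $\mos{\state}{i-2}(\delay^-)=\mos{\state}{i-1}(0)$. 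Thus the endpoint identity already satisfied by $\mos{\state}{i-1}$ is verbatim the consistency condition for the $i$-th DAE, so \Cref{thm:wellPosednessDAE} delivers a unique $\mos{\state}{i}\in\calC^1([0,\delay),\R^\stateDim)$, and setting $\mos{\state}{i}(0)=\mos{\state}{i-1}(\delay^-)$ keeps the concatenation continuous as demanded by \Cref{def:solutionConcept}.

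The main obstacle is precisely this consistency argument, which is where the hypothesis \emph{not advanced} does the essential work. If $s\ge 2$ the algebraic part would depend on $\shift{\delay}{\dot{\state}}$, whose one-sided limits across a junction generally disagree---the history's derivative at $0$ need not match the derivative of the first solution piece---so the hand-off value would fail to be consistent, exactly as in \Cref{ex:advancedDDAE}. I would therefore isolate and prove the statement ``the delayed data appearing in $A$ is continuous across junctions'' from $s\le 1$ and continuity of $\state$; the remainder is bookkeeping, namely finitely many applications of \Cref{thm:wellPosednessDAE}, checking that each piece stays $\calC^1$ up to and including $t=\delay$ so that the hand-off derivatives exist, and concatenating the pieces to a solution of \eqref{eq:DDAE:IVP} on all of $\timeInt$.
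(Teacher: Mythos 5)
Your proposal follows essentially the same route as the paper: apply the method of steps to the strangeness-free reformulation, note that the not-advanced hypothesis (via \Cref{lem:notAdvanced}) forces the algebraic part to involve the delayed state only at order zero, and then propagate consistency of the hand-off values by letting $t\nearrow\delay$ in the algebraic constraint of the previous interval --- your junction bookkeeping $0=A((i-1)\delay,\mos{\state}{i-1}(\delay^-),\mos{\state}{i-2}(\delay^-))$ is exactly the paper's argument, carried out for general $i$. The one misstep is the existence engine: \Cref{thm:wellPosednessDAE} presupposes that a sufficiently smooth solution $\state^\star$ already exists and only asserts well-posedness around it, so it cannot ``yield'' $\mos{\state}{1}$; the correct tool, and the reason the statement assumes \Cref{hyp:2} with both sets of characteristic values $\mu,a,d$ and $\mu+1,a,d$, is the existence result \cite[Thm.~4.13]{KunM06}, which gives a unique local solution of the strangeness-free system through any consistent initial value and shows that it locally solves the original DAE (globalized on each interval as in \cite[Rem.~4.14]{KunM06}). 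With that citation corrected, your argument matches the paper's proof.
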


\begin{proof}
	Since the DDAE~\eqref{eq:DDAE} satisfies \Cref{hyp:2} and is not advanced, \Cref{lem:notAdvanced} implies that the strangeness-free reformulation is of the form
	\begin{align}
		\label{eq:DDAE:sfree:notAdvanced}
		0 &= D(t,\state,\dot{\state},\shift{\delay}{\state}), & 
		0 &= A(t,\state,\shift{\delay}{\state})
	\end{align}
	with the understanding that $A$ may not depend on $\shift{\delay}{\state}$. Applying the method of steps to \eqref{eq:DDAE:sfree:notAdvanced} yields the sequence of initial value problems
	\begin{equation}
		\label{eq:DDAE:sfree:MOS}
		\begin{aligned}
			0 &= D\left(t + (i-1)\delay,\mos{\state}{i},\dmos{\state}{i},\mos{\state}{i-1}\right), \\
			0 &= A\left(t+ (i-1)\delay,\mos{\state}{i},\mos{\state}{i-1}\right),\\
			\mos{\state}{i}(0) &= \mos{\state}{i-1}(\delay^-).
		\end{aligned}
	\end{equation}		
	Since the history function is admissible, we can (locally) solve \eqref{eq:DDAE:sfree:MOS} for $i=1$ and by \cite[Theorem~4.13]{KunM06} this solution also is a solution of~\eqref{eq:sequenceDAEs}. Although this solution is of local nature it can be globalized by applying the cited theorem again until we reach the boundary of $\calM_{\mu}$ (cf. \cite[Remark~4.14]{KunM06}). If we assume that the solution exists on the time interval $[0,\tau)$ this immediately implies 
\begin{displaymath}
	0 = \lim_{t\nearrow \delay} A(t,\mos{\state}{1}(t),\mos{\state}{0}(t)) = A(\delay,\mos{\state}{1}(\delay^-),\mos{\state}{0}(\delay)).
\end{displaymath}
Hence, $\mos{\state}{1}(\delay^-)$ is consistent for the DAE~\eqref{eq:sequenceDAEs} with $i=2$. The result follows iteratively by repeating this procedure.
\end{proof}

\begin{corollary}
	\label{thm:solvability:hybrid}
	Suppose that the numerical and experimental subsystem both satisfy \Cref{hyp:2} with $\mu=0$. Then for any $\delay>0$ and for any admissible history function $\history$, the initial trajectory problem for the shifted hybrid numerical-experimental system~\eqref{eq:hybridSystem:shifted} is solvable.
\end{corollary}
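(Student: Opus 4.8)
The plan is to reduce the claim to the general solvability result \Cref{thm:solvabilityDDAE}, using \Cref{thm:regularStrangenessFree} to supply the regularity hypotheses and \Cref{lem:notAdvanced} to rule out the advanced case. First I would invoke \Cref{thm:regularStrangenessFree}: since the numerical and experimental subsystems satisfy \Cref{hyp:2} with $\check{\mu}=\hat{\mu}=0$ and $\delay>0$, the shifted hybrid model \eqref{eq:hybridSystem:shifted} satisfies \Cref{hyp:2} with $\mu=0$, $a=\check{a}+\hat{a}$, and $d=\check{d}+\hat{d}$. In particular the shifted hybrid DDAE is strangeness-free and its associated DAE is regular in the sense of \Cref{thm:wellPosednessDAE}.

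Next I would check that \eqref{eq:hybridSystem:shifted} is not advanced. Because $\mu=0$, the algebraic component of the strangeness-free reformulation is the difference equation \eqref{eq:differenceEquation} evaluated at $\mu=0$, namely $0=A(t,\state(t),\shift{\delay}{\state})$, which carries no delayed derivatives $\shift{\delay}{\dot{\state}},\shift{\delay}{\ddot{\state}},\ldots$. Hence $\tfrac{\partial A}{\partial \shift{\delay}{\state^{(k)}}}\equiv 0$ for all $k\geq 1$, and \Cref{lem:notAdvanced} classifies the DDAE as retarded; in particular it is not advanced. This is precisely the structural feature that \Cref{ex:advancedDDAE} shows can fail for general DDAEs, so it is worth isolating explicitly.

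The remaining hypothesis of \Cref{thm:solvabilityDDAE} is that \eqref{eq:hybridSystem:shifted} satisfies \Cref{hyp:2} with both $\mu,a,d$ and $\mu+1,a,d$, i.e.\ at array levels $0$ and $1$ with identical $a$ and $d$. This companion condition is where I expect the main (if mild) obstacle to lie, since \Cref{thm:regularStrangenessFree} only certifies level $0$. I would discharge it by observing that the block matrix functions $\Za$, $\Ta$, $\Zd$ constructed in the proof of \Cref{thm:regularStrangenessFree} are assembled solely from the subsystem data and the coupling derivatives $\tfrac{\partial \check{G}}{\partial \state_1}$ and $\tfrac{\partial \hat{F}}{\partial u_2}$, none of which depends on the derivative-array level; thus, provided the regular strangeness-free subsystems also satisfy \Cref{hyp:2} at level $1$ with the same characteristic values (which is the standard companion property of regular strangeness-free DAEs under the standing smoothness assumption), the very same blocks certify the level-$1$ condition for the hybrid system with $a$ and $d$ unchanged.

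With smoothness, strangeness index $\mu=0$, \Cref{hyp:2} at levels $0$ and $1$, the not-advanced property, and the assumed admissible history function $\history$ all verified, \Cref{thm:solvabilityDDAE} applies directly and yields a solution of the initial trajectory problem for \eqref{eq:hybridSystem:shifted}. Since $\delay>0$ and the admissible $\history$ were arbitrary, this is exactly the assertion of the corollary; the method-of-steps globalization extending the local solution across all of $\timeInt$ is already contained in the proof of \Cref{thm:solvabilityDDAE} and need not be repeated here.
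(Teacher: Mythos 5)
Your proposal is correct and follows essentially the same route as the paper's own proof: invoke \Cref{thm:regularStrangenessFree} to get strangeness-freeness of the shifted hybrid system, use \Cref{lem:notAdvanced} to conclude it is not advanced, and then apply \Cref{thm:solvabilityDDAE}. The only difference is that you explicitly discharge the level-$(\mu+1)$ rank condition of \Cref{hyp:2}, which the paper's three-line proof leaves implicit; that is a reasonable extra precaution but does not change the argument.
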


\begin{proof}
	\Cref{thm:regularStrangenessFree} ensures that the shifted hybrid system is strangeness-free. From \Cref{lem:notAdvanced} we deduce that \eqref{eq:hybridSystem:shifted} is not advanced. The result thus follows immediately from \ref{thm:solvabilityDDAE}.
\end{proof}

\begin{example}
	Although the system for the pendulum~\eqref{eq:pendulum} is not strangeness-free, \Cref{ex:PMSD:hybrid:shifted} shows that the shifted hybrid system resulting from coupling the pendulum with the mass-spring-damper system is not advanced. In particular, \Cref{thm:solvabilityDDAE} ensures that the associated initial trajectory problem is solvable.
\end{example}

\begin{remark}
	Even if the DDAE~\eqref{eq:DDAE} is advanced, the initial trajectory problem~\eqref{eq:DDAE:IVP} may have a solution if the history function satisfies additional \emph{splicing conditions} \cite{BelZ03,Ung18}. In the linear case, this is analyzed in \cite{Ung18} for the case that the index of the matrix pencil $(E,A)$ is less than or equal three. We conjecture that a similar analysis is possible for the nonlinear case as well. 
\end{remark}
%
%=============================================================================
%=========  Conclusions
%=============================================================================
\section{Conclusions}
In this paper, we have analyzed the solvability of (nonlinear) delay differential-algebraic equations (DDAEs) that arise in real-time dynamic substructuring. We show in a sequence of results (\Cref{thm:regularStrangenessFree}, \Cref{thm:solvabilityDDAE}, and \Cref{thm:solvability:hybrid}) that if both subsystems are strangeness-free, then the initial trajectory problem for the hybrid model is solvable. The motivational example of a coupled pendulum-mass-spring-damper system, described in \Cref{subsec:motivationalExample}, that we solvability can also be obtained for systems that are not strangeness-free.

The analysis indicates the need for an energy-based formulation of the subsystems (cf.~\Cref{rem:pH}) to reduce the difficulties associated with higher index problems.  We expect that the specific structure of so-called port-Hamiltonian descriptor systems \cite{BeaMXZ18}, a particular energy-based modeling paradigm, simplifies the analysis of the hybrid numerical-experimental setup and provides more robust models with respect to perturbations and uncertainty. Consequently, an extension of port-Hamiltonian systems to delay systems is an interesting open research question. Another further research direction is to extend the ideas from \cite{Ung18} that allow rewriting a specific class of advanced systems as non-advanced systems, provided that the history function satisfies so-called splicing conditions \cite{BelZ03}.
%
%=============================================================================
%=========  Bibs
%=============================================================================
\bibliographystyle{plain}
\bibliography{references}

\end{document}